\documentclass[a4paper,10pt]{article}
\usepackage[english]{babel}
\usepackage{graphicx}
\usepackage[english]{babel}
\usepackage{amsmath}
\usepackage{amssymb}
\usepackage{amsfonts}
\usepackage{enumerate}
\usepackage{pstricks}
\usepackage{cite}
\usepackage{hyperref}
\usepackage{float}
\usepackage{comment}
\usepackage{authblk}

\usepackage{accents}

\usepackage[top=2cm, bottom=2cm, left=2cm, right=2cm]{geometry}

\newcommand*{\m}[1]{\mathbf{#1}}

\DeclareMathOperator{\RE}{Re}
\DeclareMathOperator{\IM}{Im}
\DeclareMathOperator{\SC}{Sc}
\DeclareMathOperator{\VEC}{Vec}

\newtheorem{theorem}{Theorem}[section]
\newtheorem{thm}[theorem]{Theorem}
\newtheorem{prop}[theorem]{Proposition}
\newtheorem{cor}[theorem]{Corollary}
\newtheorem{defn}[theorem]{Definition}

\newtheorem{lem}[theorem]{Lemma}
\newtheorem{rem}[theorem]{Remark}
\newtheorem{example}[theorem]{Example}
\newenvironment{proof}[1][Proof]{\textbf{#1: } }{\begin{flushright}$\blacksquare$\end{flushright}}

\begin{document}
	
	\title{Towards a Function Theory of Complexified Octonions}
	\author{Rolf Sören Krau\ss har\footnote{Chair of Mathematics, Faculty of Educational Sciences, University of Erfurt, Norhh\"auser Str. 63, D-99089 Erfurt, Germany}, Heikki Orelma\footnote{Tampere Institute of Mathematics, Hepolamminkatu 51, 33720 Tampere, Finland
			E-Mail: heikki.orelma@proton.me (Corresponding author)}}
	\date{\today}

	\maketitle
	
	\begin{abstract}
		In this article we study function theory in the $16$-dimensional space of complexified octonions $\mathbb{O}_{\mathbb{C}}=\mathbb{C}\otimes\mathbb{O}$. We define the complexified octonionic Cauchy–Riemann operator
		\[
		D=\sum_{j=0}^7 e_j\partial_{z_j}
		\]
		where $\partial_{z_j}:=\partial_{x_j}+i\partial_{y_j}$ for $j=0,1,...,7$. This operator together with its octonionic conjugate factorize the ultrahyperbolic operator
		\[
		\mathcal{L}=D\overline{D}=\overline{D}D=\sum_{j=0}^7  \partial_{z_j}^2.
		\]
		In real coordinates we obtain a Lichnerowicz-Weitzenb\"ock type formula of the form  
		\[
		\mathcal{L} = \Delta_x  - \Delta_y  + 2i \langle \partial_x, \partial_y \rangle
		\]
		One main goal of this paper consists in investigating the fundamental solution and polynomial solutions of the operator $\mathcal{L}$. The set of polynomial solutions is completely described by proposing explicit basis constructions and dimension formulae.  
		Another main goal that is to carefully figure out in detail how complexified octonion analysis precisely differs from classical real octonion analysis in $8$ dimensions. We  explain some really substantial differences. In particular, the component functions are not Euclidean harmonic but ultrahyperbolic harmonic exhibiting a very different regularity behavior. Furthermore, the classical Fischer-decomposition is not anymore true in this context. 
		
	\end{abstract}
	Mathematics Subject Classification (2020): Primary 30G35; Secondary 17A70\\
	\\
	Keywords: Complexified octonions, Octonion analysis, Cauchy–Riemann operator, Ultrahyperbolic operator, Fundamental solutions, Lichnerowics-Weitzenb\"ock formulae
	
	\section{Introduction}
	Hypercomplex number systems offer algebraic extensions of the complex number field to higher dimensions and have an enormous range of applications in pure and applied mathematics, in particular in mathematical analysis and mathematical physics. Many physical problems are related to elliptic, parabolic or hyperbolic partial differential equations where related Laplacians appear. 
	
	The famous theorem of Frobenius and Hurwitz tell us that the only four normed division algebras over $\mathbb{R}$ are the real numbers $\mathbb{R}$ itself, the two-dimensional complex number field $\mathbb{C}$, the four-dimensional Hamiltonian quaternionic skew field $\mathbb{H}$ and the eight-dimensional Cayley octonions $\mathbb{O}$, see for instance the famous overview report from J. Baez \cite{Baez}. 
	In fact, such as explained there in detail, these number systems exactly arise by applying the well-known Cayley doubling process to the reals where in each step of doubling these number fields of real dimension $2^n$, $n=1,2,\ldots$ arise. However, already going beyond the complex numbers fundamental algebraic properties successively get lost. In the context of the four-dimensional real quaternions the multiplication operation is not commutative anymore. Next, in the eight-dimensionnal  octonionic setting even the associativity is lost, although one still has an alternative composition algebra. These weaker properties also get lost in the next doubling process which leads to the 16-dimensional sedenions. These do not form a normed algebra, either, the only basic property that remains is the power associativity. 
	
	The octonionic setting thus represents a very special setting, because octonions are the largest normed Cayley algebras over the reals with the property that every non-zero element has a multiplicative inverse. 
	
	It should be mentioned that besides the Cayley-Dickson doubling there are other constructions of higher dimensional algebraic extensions of the complex numbers such as for example Clifford algebras. Clifford algebras are in general associative and they include $\mathbb{R}$, $\mathbb{C}$ and $\mathbb{H}$ as special subcases, but in all others one has to deal with zero-divisors also when considering other multiplication rules even in the low dimensional settings of bi-complex numbers or bi-quaternions. 
	
	Further options are to consider tensor products of one of these algebras with $\mathbb{C}$, $\mathbb{H}$ and so on. Forming the tensor product with $\mathbb{C}$ naturally leads to the complexified quaternions, the complexified octonions or complexified Clifford algebras. 
	
	If we want to use all these algebras in analysis, then one needs to develop tools of an associated function theory. Since the 1930s the development of quaternionic analysis and Clifford analysis has emerged to a main stream topic in higher dimensional function theory, cf. \cite{Bosshard1939,Fueter1949,BDS}. 
	Already the Fueter school considered for the first time in the 1940s functions in complexified Clifford algebras (see \cite{Fueter1949} and the references therein). An impactful  breakthrough however was achieved in the 1980s by J. Ryan with his paper \cite{Ryan} on complexfied Clifford analysis. In fact complexified Clifford analysis combined successfully tools from complex analysis with tools from Clifford analysis. More than twenty years later Hermitian Clifford analysis extended again this line of research in a pioneering way (see e.g. \cite{SaSo2002,BSS}). Recently one even started to investigate the combination of Clifford algebras with Cayley algebras. 
	
	On the other hand one also observed a strong effort in the investigation on the level of developing a function theory in the non-associative octonions. The first contribution in this direction was provided by P. Dentoni and M. Sce in 1973, cf. \cite{DS} and about fifteen years later also by K. N\^{o}no in 1988, see \cite{Nono}. A major breakthrough on octonionic analysis then was provided by the school of Xingmin Li L. Peng and their collaborators etc. up from 2000, presenting in particular a generalization of the Cauchy integral formula and Taylor and Laurent representation theorems, see for example \cite{XL2000,XL2002,XLT2008}. More recently J. Wang and X. Li \cite{WL2018,WL2020}, R.S. Krau{\ss}har et al. \cite{CKS2023,ConKra2021,KraMMAS,KFVR}, B. Prather \cite{Prather2021}, Q. Huo and Guangbin Ren \cite{QR2021,QR2022} also started to study function spaces in the octonionic setting attached with its analytical concepts. See also \cite{FL} where basic functional analytic concepts together with potential applications to some PDE have been outlined.     
	\par\medskip\par
	
	A natural next step is to develop basic function theoretical tools for the setting of complex octonions which is the overall goal of this paper. More precisely, we consider functions defined in open subsets of $\mathbb{R}^{16}$ which take values in $\mathbb{O}_{\mathbb{C}} := \mathbb{O} \otimes \mathbb{C}$. We put the focus on such functions that are annihilated at interior points of $\Omega$ by the complexified octonionic Cauchy-Riemann  $D := \frac{\partial }{\partial z_0} + \sum\limits_{j=1}^7 e_j \frac{\partial }{\partial z_j} $. The latter can be written in the form $D = \partial_x + i \partial_y$ where $i$ is the complex imaginary unit of $\mathbb{C}$ intertwining with each octonionic unit $e_j$ and $\partial_x$ and $\partial_y$ are the usual (real) octonionic Cauchy-Riemann operators considered earlier in the above mentioned papers, such as for instance in \cite{XL2000,XL2002}. The first feature we observe is that the octonionic conjugated operator $\overline{D}$ defined by $\overline{D} :=  \frac{\partial }{\partial z_0} - \sum\limits_{j=1}^7 e_j \frac{\partial }{\partial z_j}$ now gives rise to a Lichnerowicz-Weitzenb\"ock type formula  factorizing the ultrahyperbolic Laplace operator viz $D \overline{D}f = \Delta_x f - \Delta_y f +2i \langle \partial_x, \partial_y \rangle$. In turn the Hermitean conjugate (involving the octonionic conjugation and the complex conjugation at the same time) produces a Lichnerowicz-Weitzenb\"ock type formula involving the Euclidean Laplacian of $\mathbb{R}^{16}$, namely $D D^{+} = \Delta_x f + \Delta_y f +i(\partial_y(\partial_{\overline{x}} f)-\partial_y(\partial_{\overline{x}}f))$. This, together with the arising different pairs of Cauchy-Riemann operators, is carefully studied in Section~3 of this paper. 
	In Section~4 we next study the fundamental solution of the operator ${\cal{L}} := \Delta_x - \Delta_y +2i \langle \partial_x,\partial_y\rangle$. This is done using the Fourier transform considering the operator in polar coordinates. We carefully study the regularity properties of the fundamental solution which in particular allows us conclude that it really produces the Dirac $\delta$-function. 
	
	Finally in Section~5 we round off our paper by a study of polynomial solutions of this operator and provide dimension formulas together with explicit basis functions. In contrast to the associative Clifford case the standard Fischer-decomposition does not hold in our context - exposing clearly that complexified octonionic analysis is essentially different from (complexified) Clifford analysis. 
	
	Since the complexified octonions turned recently out to play a crucial role in theoretical physics, namely in developing extensions of the standard model of particle physics in a way also incorporating gravitational effects, see for example \cite{Burdik},  a basic toolkit for a complexified octonionic analysis will be of importance for further investigations in these directions. Note that recently also tensor products of Clifford algebras and Cayley Dickson-algebras are attracting current interest, see for instance \cite{Dinh}. As one future perspective the methods used in this paper here can serve as a roadmap to also develop the basic function theoretical tools in the context of general ``CD-Clifford algebras'' in which one has to combine all these toolkits together, such as also already theoretically proposed in \cite{LRW}. Investigations in this direction belong to the hot topics of current research interests and may now additionally give important impulses in the study of ultrahyperbolic PDE in analysis and mathematical physics.

	\section{Preliminaries}
	\subsection{The octonion algebra}
	The octonion algebra $\mathbb{O}$ is an $8$-dimensional non-commutative and non-associative unital algebra with unit element, generated by the generators
	\[
	\{ e_0,e_1,...,e_7\},
	\]
	where $e_0=1$ is the multiplicative identity. The algebra is a real algebra, so an arbitrary element $x\in\mathbb{O}$ can be written in the form
	\[
	x=x_0+x_1e_2+\cdots+x_7e_7,
	\]
	where $x_j\in\mathbb{R}$ for each $j=0,1,...,7$. The octonion algebra is thus an algebra where multiplication $\mathbb{O}\times\mathbb{O}\to\mathbb{O}$ is defined under the condition that it is unital and its norm is multiplicative. These requirements come at the cost that the algebra is no longer associative or commutative. There is considerable freedom in defining the multiplication, and in total there are exactly $480$ distinct ways to define multiplication on the octonions. In this research article, we use the following choice, which is one of the most common ones and has an important property for our upcoming calculations.    
	\begin{center}
		\begin{tabular}{c|cccccccc}
			& $1$   & $e_1$  & $e_2$  & $e_3$  & $e_4$  & $e_5$  & $e_6$  & $e_7$\\
			\hline
			$1$     & $1$   & $e_1$  & $e_2$  & $e_3$  & $e_4$  & $e_5$  & $e_6$  & $e_7$\\
			$e_1$   & $e_1$ & $-1$ & $e_3$  & $-e_2$ & $e_5$  & $-e_4$ & $-e_7$ & $e_6$\\
			$e_2$   & $e_2$ & $-e_3$ & $-1$ & $e_1$  & $e_6$  & $e_7$  & $-e_4$ & $-e_5$\\
			$e_3$   & $e_3$ & $e_2$  & $-e_1$ & $-1$ & $e_7$  & $-e_6$ & $e_5$  & $-e_4$\\
			$e_4$   & $e_4$ & $-e_5$ & $-e_6$ & $-e_7$ & $-1$   & $e_1$  & $e_2$  & $e_3$\\
			$e_5$   & $e_5$ & $e_4$  & $-e_7$ & $e_6$  & $-e_1$ & $-1$ & $-e_3$ & $e_2$\\
			$e_6$   & $e_6$ & $e_7$ & $e_4$  & $-e_5$ & $-e_2$ & $e_3$ & $-1$  & $-e_1$\\
			$e_7$   & $e_7$ & $-e_6$ & $e_5$  & $e_4$ & $-e_3$ & $-e_2$ & $e_1$  & $-1$
		\end{tabular}
	\end{center}
	From the multiplication table, one can see that the generators anticommute with each other, i.e., $e_ie_j=-e_je_i$ for $i,j=1,...,7$ and $i\neq j$. Moreover, $e_j^2=-1$ for each $j=1,...,7$. The $2\times 2$ subtable in the upper left corner of the multiplication table shows that $\{ 1,e_1\}$ generate the subalgebra $\mathbb{C}$ of complex numbers. Similarly, the $4\times 4$ subtable in the upper left corner shows that $\{1,e_1,e_2,e_3\}$ generate the quaternion subalgebra $\mathbb{H}$. The generator $e_4$ plays a special role, since when multiplied from the right, the set of generators ${e_4,e_5,e_6,e_7}$ becomes a set of quaternion generators, which results in a $4\times 4$  table of quaternions appearing in the lower right corner. Hence, an arbitrary octonion $x\in\mathbb{O}$ can be written in the form
	\begin{align}\label{persmuoto}
		x=a+be_4,    
	\end{align}
	where $a=x_0+x_1e_1+x_2e_3+e_3e_3$ and $b=x_4+x_4e_1+x_5e_2+x_6e_3+x_7e_3$ are quaternions. This enables the use of quaternions when computing with octonions, allowing one to avoid the issues caused by non-associativity, while not having to reduce all calculations to the component level, which would otherwise become overly chaotic. Computing with octonions is therefore based on the following lemma:
	\begin{lem}[\cite{KO}]\label{lem:paravecprod}
		Let $a,b\in\mathbb{H}$, then
		\begin{enumerate}
			\item[(a)] $e_4a=\overline{a}e_4$,
			\item[(b)] $e_4(ae_4)=-\overline{a}$,
			\item[(c)] $(ae_4)e_4=-a$,
			\item[(d)] $a(be_4)=(ba)e_4$,
			\item[(e)] $(ae_4)b=(a\overline{b})e_4$,
			\item[(f)] $(ae_4)(be_4)=-\overline{b}a$,
		\end{enumerate}
		where the overline denotes the quaternionic conjugate.
	\end{lem}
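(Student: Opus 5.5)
The plan is to reduce everything to the Cayley--Dickson doubling rule and then read off the six identities as special cases. Both sides of each identity in (a)--(f) are real bilinear (or linear) in $a$ and $b$, so it suffices to verify them for $a,b$ ranging over the basis $\{1,e_1,e_2,e_3\}$ of $\mathbb{H}$. The first observation, read off from the multiplication table, is that
\[
e_1e_4=e_5,\qquad e_2e_4=e_6,\qquad e_3e_4=e_7 .
\]
Hence every octonion is uniquely $x=a+be_4$ with $a,b\in\mathbb{H}$, and products with elements of the form $be_4$ are products with $e_5,e_6,e_7$ (or $e_4$ itself when $b=1$).

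First I would dispose of the short identities (a)--(c) directly from the table. For (a), $e_4e_k=-e_ke_4$ for $k=1,2,3$ by anticommutation, and $e_41=1e_4$; since $\overline{e_k}=-e_k$ and $\overline{1}=1$, this is $e_4a=\overline{a}e_4$ on the basis, and linearity gives the general case. For (c), the table gives $e_5e_4=-e_1$, $e_6e_4=-e_2$, $e_7e_4=-e_3$ and $e_4e_4=-1$, so $(ae_4)e_4=-a$. For (b), the table gives $e_4e_5=e_1$, $e_4e_6=e_2$, $e_4e_7=e_3$ and $e_4e_4=-1$, so $e_4(e_ke_4)=e_k=-\overline{e_k}$ and $e_4(1e_4)=-1=-\overline{1}$, which is (b).

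For (d)--(f) I would check, on the basis, the single doubling formula
\[
(a+be_4)(c+de_4)=(ac-\overline{d}b)+(da+b\overline{c})e_4,\qquad a,b,c,d\in\mathbb{H}.
\]
The identities then drop out as special cases:
\begin{itemize}
\item $b=c=0$ gives (d), namely $a(de_4)=(da)e_4$;
\item $a=d=0$ gives (e), namely $(be_4)c=(b\overline{c})e_4$;
\item $a=c=0$ gives (f), namely $(be_4)(de_4)=-\overline{d}b$.
\end{itemize}
Since the quaternionic products on the right are associative and known, verifying this formula is a finite check. In the three relevant cases it amounts to comparing $16$ products each of the forms $e_j e_{k+4}$, $e_{j+4}e_k$ and $e_{j+4}e_{k+4}$ (with $j,k\in\{0,1,2,3\}$) against the table.

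To shorten that check, I would use that the octonions are alternative. By Artin's theorem any two elements generate an associative subalgebra, which together with anticommutation settles the diagonal and the $1$-cases immediately. For example, $e_k(e_ke_4)=e_k^2e_4=-e_4$ matches (d) with $a=b=e_k$. The genuinely mixed cases $j\neq k$ in $\{1,2,3\}$ are then read off from the table row by row. For instance,
\[
e_1e_6=-e_7=(e_2e_1)e_4,
\]
as (d) requires.

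The main obstacle is not conceptual but one of conventions. I must make sure the given table really is the doubling with $e_4$ on the right, and with precisely the orders $da$, $b\overline{c}$ and $\overline{d}b$ rather than a mirrored variant, since about half of the $480$ multiplication conventions would flip these orders. I would therefore first test the three mixed cases $e_1e_6$, $e_5e_2$ and $e_5e_6$ to pin down the convention before doing the full check.
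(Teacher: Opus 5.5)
Your proposal is correct. The lemma is a finite check against the table, and your reduction to basis elements by real bilinearity is valid. The convention worry also resolves in your favour: $e_1e_6=-e_7=(e_2e_1)e_4$, $e_5e_2=-e_7=(e_1\overline{e_2})e_4$ and $e_5e_6=-e_3=-\overline{e_2}\,e_1$, and the remaining mixed and diagonal entries of the table match (d)--(f) as well.

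The paper gives no proof of its own; it only cites \cite{KO}. It also runs the logic in the opposite direction from you: the lemma is taken as basic, and the doubling formula of Lemma~\ref{tulo} is derived from it. Your detour through the doubling formula is harmless but buys nothing. By bilinearity, verifying that formula in the cases $b=c=0$, $a=d=0$, $a=c=0$ is literally the basis check of (d), (e), (f).

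One point to tidy up concerns the Artin shortcut. It presupposes that this particular table defines an alternative algebra. In the paper, alternativity (Proposition~\ref{ALT}) is said to be checkable ``using the above techniques'', i.e.\ via this very lemma, so leaning on it here is mildly circular. To keep the argument self-contained, read the diagonal entries directly off the table, e.g.\ $e_1e_5=-e_4$, $e_5e_1=e_4$, $e_5e_5=-1$; this costs nothing.
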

	By the previous lemma, the product of octonions is given by:
	\begin{lem}[\cite{KO}]\label{tulo}
		Let $x=a+be_4$ and $y=c+de_4$ be octonions in the quaternionic form. Then their product in quaternionic form is
		\begin{equation*}
			xy=(ac-\overline{d}b)+(b\overline{c}+da)e_4.
		\end{equation*}
	\end{lem}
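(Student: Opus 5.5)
We expand the product bilinearly and reduce each of the four cross terms with Lemma~\ref{lem:paravecprod}. Writing $x=a+be_4$ and $y=c+de_4$ with $a,b,c,d\in\mathbb{H}$, distributivity of octonionic multiplication (which holds despite non-associativity) gives
\[
xy = ac + a(de_4) + (be_4)c + (be_4)(de_4).
\]
The first term $ac$ is a product of quaternions and stays as it is.

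The three remaining terms are handled one rule at a time:
\begin{itemize}
\item[(i)] By Lemma~\ref{lem:paravecprod}(d), with the roles $a\mapsto a$ and $b\mapsto d$, we get $a(de_4)=(da)e_4$.
\item[(ii)] By Lemma~\ref{lem:paravecprod}(e), applied with $b$ in place of $a$ and $c$ in place of $b$, we get $(be_4)c=(b\overline{c})e_4$.
\item[(iii)] By Lemma~\ref{lem:paravecprod}(f), with $b$ in place of $a$ and $d$ in place of $b$, we get $(be_4)(de_4)=-\overline{d}b$.
\end{itemize}
Collecting the pure quaternion parts and the parts multiplied on the right by $e_4$ then gives
\[
xy=(ac-\overline{d}b)+(b\overline{c}+da)e_4,
\]
which is the claim.

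The only point needing care is the bookkeeping of variable substitutions in Lemma~\ref{lem:paravecprod}. Its letters $a,b$ are dummy variables, so each rule must be instantiated with the correct pair. The order of the quaternion factors must also be kept exactly, since $\mathbb{H}$ is noncommutative: for example, $da$ and not $ad$, and $\overline{d}b$ and not $b\overline{d}$. Beyond this, no real obstacle is expected. If one wished to be fully self-contained, the rules (d)–(f) could themselves be checked on basis elements against the multiplication table and extended by bilinearity, but here we take Lemma~\ref{lem:paravecprod} as given.
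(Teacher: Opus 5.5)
Your proof is correct and is the same argument the paper uses: the paper obtains Lemma~\ref{tulo} directly ``by the previous lemma'', that is, by expanding the product bilinearly and reducing the three cross terms with rules (d), (e), (f) of Lemma~\ref{lem:paravecprod}. Your instantiations $a(de_4)=(da)e_4$, $(be_4)c=(b\overline{c})e_4$ and $(be_4)(de_4)=-\overline{d}b$ are all correct.
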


	Using the quaternionic conjugate, one can define the octonionic conjugate for octonions of the form (\ref{persmuoto}) by setting
	\[
	\overline{x}:=\overline{a}-be_4.
	\]
	As with quaternions, an octonion $x\in\mathbb{O}$ can be written in the form
	\begin{align}\label{kvaternio}
		x=x_0+\m{x},
	\end{align}
	where $\m{x}=x_1e_1+\cdots+x_7e_7$. Then the conjugate is
	\[
	\overline{x}=x_0-\m{x}.
	\]
	We prove the following result as an example of how to compute with octonions.
	
	\begin{prop}
		Let $x,y\in\mathbb{O}$. Then:
		\begin{itemize}
			\item[(a)] $\overline{\overline{x}}=x$,
			\item[(b)] $\overline{xy}=\overline{y}\, \overline{x}$.
		\end{itemize}
	\end{prop}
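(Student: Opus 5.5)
We work throughout in the quaternionic form \eqref{persmuoto}: every octonion is written $x=a+be_4$ with $a,b\in\mathbb{H}$, and the conjugate is $\overline{x}=\overline{a}-be_4$. The idea is to reduce both claims to the corresponding, already known facts for quaternions, namely $\overline{\overline{a}}=a$, $\overline{ac}=\overline{c}\,\overline{a}$, and the additivity and real-linearity of quaternionic conjugation. This lets us avoid any use of associativity in $\mathbb{O}$. The only octonionic tool needed is the product formula of Lemma~\ref{tulo}, which already encodes Lemma~\ref{lem:paravecprod}.

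For (a), we apply the definition twice. First $\overline{x}=\overline{a}+(-b)e_4$, so its quaternionic components are $\overline{a}$ and $-b$. Applying the definition again gives $\overline{\overline{x}}=\overline{\overline{a}}-(-b)e_4=a+be_4=x$, using only $\overline{\overline{a}}=a$ in $\mathbb{H}$.

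For (b), write $x=a+be_4$ and $y=c+de_4$. By Lemma~\ref{tulo},
\[
xy=(ac-\overline{d}b)+(b\overline{c}+da)e_4,
\]
so
\[
\overline{xy}=\overline{ac-\overline{d}b}-(b\overline{c}+da)e_4=\overline{c}\,\overline{a}-\overline{b}d-(b\overline{c}+da)e_4 .
\]
Here we used the quaternionic identities $\overline{ac}=\overline{c}\,\overline{a}$ and $\overline{\overline{d}b}=\overline{b}d$. Next we compute $\overline{y}\,\overline{x}$ with $\overline{y}=\overline{c}+(-d)e_4$ and $\overline{x}=\overline{a}+(-b)e_4$. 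Lemma~\ref{tulo} is applied with the substitutions $a\mapsto\overline{c}$, $b\mapsto -d$, $c\mapsto\overline{a}$, $d\mapsto -b$, which gives
\[
\overline{y}\,\overline{x}=\bigl(\overline{c}\,\overline{a}-\overline{(-b)}(-d)\bigr)+\bigl((-d)\overline{\overline{a}}+(-b)\overline{c}\bigr)e_4=(\overline{c}\,\overline{a}-\overline{b}d)-(da+b\overline{c})e_4 .
\]
Comparing the two expressions componentwise proves (b). Note that $b\overline{c}+da$ and $da+b\overline{c}$ agree simply because quaternion addition is commutative; no commutativity of multiplication is needed.

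The main obstacle is bookkeeping: performing the substitution into Lemma~\ref{tulo} correctly, with the signs and the order of the noncommuting quaternionic factors exactly right. In particular, we must check that the order of factors in $\overline{b}d$ and $da$ agrees in both computations. If a mismatch appeared, it would point to a misreading of the product formula, and we would recheck it against parts (d)--(f) of Lemma~\ref{lem:paravecprod}.
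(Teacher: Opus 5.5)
Your proposal is correct and takes essentially the same route as the paper: expand $xy$ with Lemma~\ref{tulo}, conjugate using the quaternionic rules, and then recognise the result, via the same lemma, as the product $(\overline{c}-de_4)(\overline{a}-be_4)=\overline{y}\,\overline{x}$. Your explicit check of (a) and your careful substitution into the product formula just spell out what the paper calls ``obvious'' or does in a single line.
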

	\begin{proof} Part (a) is obvious. We prove part (b). Let $x=a+be_4$ and $y=c+de_4$, where $a,b,c,d\in\mathbb{H}$. Then, using Lemma \ref{tulo} and the conjugation rules for quaternions, we obtain
		\begin{align*}
			\overline{xy}&=\overline{(ac-\overline{d}b)}-(b\overline{c}+da)e_4\\
			&= (\overline{c}\,\overline{a}-\overline{b}d)-(b\overline{c}+da)e_4\\
			&=(\overline{c}\,\overline{a}-\overline{b}d)+((-d)a+(-b)\overline{c})e_4.\\
			&=(\overline{c}-de_4)(\overline{a}-be_4)\\
			&=\overline{y}\, \overline{x}.
		\end{align*}
	\end{proof}
	For quaternions of the form (\ref{kvaternio}), the scalar and vector parts are defined by setting $\SC(x):=x_0$ and $\VEC(x):=\m{x}$. These can be computed using the \textit{projection formulas}
	\[
	\SC(x)=\frac{1}{2}(x+\overline{x})\ \text{ and }\ \VEC(x)=\frac{1}{2}(x-\overline{x}).
	\]
	Next we study the scalar part of the octonionic product.
	\begin{prop}
		Let $x=a+be_4$ and $y=c+de_4$ be octonions. Then
		\[
		\SC(xy)=\SC(ac)+\SC(\overline{b}d).
		\]
	\end{prop}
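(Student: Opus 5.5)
The plan is to reduce the statement to the quaternionic product formula of Lemma~\ref{tulo} and then take scalar parts componentwise. First, I note that for $x=a+be_4$ the scalar part of $x$ is just $\SC(a)$. This holds because $be_4$ is a real combination of $e_4,e_5,e_6,e_7$ by the right-multiplication table, so it is purely vectorial. Equivalently, from $\overline{x}=\overline{a}-be_4$ the projection formula gives $\SC(x)=\frac12(x+\overline{x})=\frac12(a+\overline{a})=\SC(a)$. Hence for any octonion written in quaternionic form, $\SC$ only sees the first quaternionic component.

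Next I apply Lemma~\ref{tulo}, which gives $xy=(ac-\overline{d}b)+(b\overline{c}+da)e_4$. By the previous observation,
\[
\SC(xy)=\SC(ac-\overline{d}b)=\SC(ac)-\SC(\overline{d}b),
\]
using linearity of $\SC$ on $\mathbb{H}$. The remaining step is to rewrite $\SC(\overline{d}b)$ in terms of $\overline{b}d$. For this I use the quaternionic identities $\SC(q)=\SC(\overline{q})$ and $\overline{\overline{d}b}=\overline{b}d$, the latter being the quaternionic case of part (b) of the preceding proposition. Together they give $\SC(\overline{d}b)=\SC(\overline{b}d)$.

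The main obstacle is the sign of this second term, and here I must record what the argument actually produces. Substituting $\SC(\overline{d}b)=\SC(\overline{b}d)$ yields
\[
\SC(xy)=\SC(ac)-\SC(\overline{b}d),
\]
which carries a minus sign, not the plus sign in the statement. The test case $x=y=e_4$ (so $a=c=0$, $b=d=1$) settles which sign is right. It gives $xy=e_4^2=-1$, so $\SC(xy)=-1$, whereas the stated formula would give $\SC(ac)+\SC(\overline{b}d)=0+1=1$. The statement as printed is therefore false for this choice of $x,y$ under the paper's own multiplication table and Lemma~\ref{tulo}.

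Consequently this plan cannot establish the formula as worded, and I do not see any convention within the paper that would rescue the plus sign. What the plan does establish is the corrected identity $\SC(xy)=\SC(ac)-\SC(\overline{b}d)$. The discrepancy should be resolved by fixing the sign in the statement rather than by changing the argument.
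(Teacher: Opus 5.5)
Your sign correction is right, and the statement as printed is false. Besides your test case $x=y=e_4$, the choice $y=\overline{x}$ (so $c=\overline{a}$, $d=-b$) would make the printed formula give $\SC(x\overline{x})=|a|^2-|b|^2$, whereas the correct value is $|a|^2+|b|^2$.

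The paper's own proof actually confirms your version. Its penultimate line $\frac{1}{2}(ac+\overline{c}\,\overline{a})-\frac{1}{2}(\overline{b}d+\overline{d}b)$ equals $\SC(ac)-\SC(\overline{b}d)$, and the final equality simply drops the minus sign.

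Your route is essentially the same computation as the paper's. You read off the first quaternionic component of Lemma~\ref{tulo} and use $\SC(\overline{d}b)=\SC(\overline{b}d)$, while the paper symmetrizes via $\frac{1}{2}(xy+\overline{y}\,\overline{x})$; yours is just more direct. The corrected sign is also exactly what the paper uses afterwards: with $\overline{y}=\overline{c}+(-d)e_4$ it yields $\langle x,y\rangle=\SC(x\overline{y})=\SC(a\overline{c})+\SC(\overline{b}d)$.
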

	\begin{proof}
		We compute using the projection formulas:
		\begin{align*}
			\SC(xy)&=\frac{1}{2}(xy+\overline{y}\, \overline{x})\\
			&= \frac{1}{2}\big( (ac-\overline{d}b)+(b\overline{c}+da)e_4+(\overline{c}\,\overline{a}-\overline{b}d)-(da+b\overline{c})e_4\big)\\
			&=\frac{1}{2} (ac+\overline{c}\,\overline{a}-\overline{d}b-\overline{b}d)\\
			&=\frac{1}{2}(ac+\overline{c}\,\overline{a})-\frac{1}{2}(\overline{b}d+\overline{d}b) \\
			&=\SC(ac)+\SC(\overline{b}d).
		\end{align*}
	\end{proof}
	This allows us to define the inner product by setting
	\[
	\langle x,y\rangle :=\SC(x\overline{y}) =\SC(a\overline{c})+\SC(\overline{b}d)=\sum_{j=0}^7 x_jy_j.
	\]
	Thus, the inner product can be expressed using the projection formulas as
	\[
	\langle x,y\rangle= \frac{1}{2}(x\overline{y}+y\overline{x}).
	\]
	In the definition of the inner product, the order of multiplication and conjugation can be exchanged quite freely. This follows from the previous proposition and the corresponding property of quaternions.
	\begin{cor}
		If $x,y\in\mathbb{O}$, then
		\[
		\SC(x)=\SC(\overline{x})
		\]
		and
		\[
		\SC(x\overline{y})= \SC(y\overline{x})=\SC(\overline{y}x)=\SC(y\overline{x}).
		\]
	\end{cor}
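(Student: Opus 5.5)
The plan is to reduce everything to two facts: the projection formula $\SC(x)=\frac12(x+\overline{x})$, and the conjugation rules $\overline{\overline{x}}=x$ and $\overline{xy}=\overline{y}\,\overline{x}$ from the preceding proposition. No componentwise computation should be needed.

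\emph{First identity.} Write $x=x_0+\m{x}$, so that $\overline{x}=x_0-\m{x}$. Then $\SC(\overline{x})=x_0=\SC(x)$ directly. Equivalently, apply the projection formula to $\overline{x}$ and use $\overline{\overline{x}}=x$:
\[
\SC(\overline{x})=\tfrac12(\overline{x}+x)=\SC(x).
\]

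\emph{Second identity, first equality.} Apply the first identity to $x\overline{y}$. By part (b) of the previous proposition together with part (a),
\[
\overline{x\overline{y}}=\overline{\overline{y}}\;\overline{x}=y\overline{x},
\]
so $\SC(x\overline{y})=\SC(\overline{x\overline{y}})=\SC(y\overline{x})$.

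\emph{Second identity, swapping the order of the factors.} We need $\SC(y\overline{x})=\SC(\overline{x}y)$, which is really the claim $\SC(uv)=\SC(vu)$ for all octonions $u,v$. I would prove this with the quaternionic form and the proposition computing $\SC(xy)$. For $u=a+be_4$ and $v=c+de_4$, that proposition gives
\[
\SC(uv)=\SC(ac)+\SC(\overline{b}d), \qquad \SC(vu)=\SC(ca)+\SC(\overline{d}b).
\]
For quaternions, $\SC(ac)=\SC(ca)$ holds (the usual trace property). Also
\[
\SC(\overline{d}b)=\SC\bigl(\overline{\overline{b}d}\bigr)=\SC(\overline{b}d),
\]
using the first identity applied in $\mathbb{H}$. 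Hence $\SC(uv)=\SC(vu)$. Taking $u=y$ and $v=\overline{x}$ yields $\SC(y\overline{x})=\SC(\overline{x}y)$.

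\emph{Remaining terms.} The listed chain ends again with $\SC(y\overline{x})$, which is already covered. The symmetric variant $\SC(\overline{y}x)$ follows in the same way: apply the trace property to $x\overline{y}$, giving $\SC(x\overline{y})=\SC(\overline{y}x)$.

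The only step that is more than bookkeeping is the commutativity of the scalar part under swapping factors. The main care point there is verifying the quaternionic trace property and keeping the conjugates in the right places in the $\SC(\overline{b}d)$ term. If one prefers to avoid the quaternionic form, the alternative is the inner product formula $\langle x,y\rangle=\sum_j x_jy_j$. It is symmetric in $x$ and $y$, so it gives the first equality immediately, and the trace property can then be checked on basis elements using the anticommutation $e_ie_j=-e_je_i$ for $i\neq j$.
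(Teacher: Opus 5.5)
Your proof is correct and follows the route the paper indicates: the paper derives the corollary from the preceding proposition on $\SC(xy)$ in quaternionic form together with the quaternionic trace property, exactly as you do, and uses the conjugation rules to get $\SC(x\overline{y})=\SC(y\overline{x})$. One remark: that proposition as printed has a sign slip (Lemma~\ref{tulo} actually gives $\SC(xy)=\SC(ac)-\SC(\overline{b}d)$, as $x=y=e_4$ shows), but your symmetry argument is unaffected, because it only uses that the expression is invariant under $(a,b)\leftrightarrow(c,d)$.
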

	Since $\SC(x\overline{x})=x\overline{x}$ for all $x\in\mathbb{O}$, the inner product induces a norm
	\[
	|x|^2:=x\overline{x}=\overline{x}x=\sum_{j=0}^7 x_j^2.
	\]
	As with the quaternion norm, the norm of octonions is a composition norm, i.e., formally
	\[
	|xy|^2=(xy)(\overline{xy})=(xy)(\overline{y}\, \overline{x})=(x\overline{x})(y\overline{y})=|x|^2|y|^2.
	\]
	The previous formula is formal because, as we see, we applied associativity just as with quaternions rather carelessly. The computation is correct, but requires the consideration of the following deeper algebraic results. We state the results without proof, but the reader can verify them by direct computation using the above techniques.
	
	\begin{prop}[$\mathbb{O}$ is an alternative algebra]\label{ALT}
		If $x,y\in \mathbb{O}$ then
		\[
		x(xy)=x^2y,\ (xy)y=xy^2,\ (xy)x=x(yx),
		\]
		
	\end{prop}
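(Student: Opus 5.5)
We prove the three identities by writing $x=a+be_4$ and $y=c+de_4$ with $a,b,c,d\in\mathbb{H}$, and expanding both sides with the product formula of Lemma~\ref{tulo}. Quaternions are associative, so each side collapses to an expression in $a,b,c,d$ and their conjugates. It then suffices to compare quaternionic components, using only associativity of $\mathbb{H}$ and the facts that $a+\overline{a}$ and $a\overline{a}=\overline{a}a=|a|^2$ are real and hence central.

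First we compute $x^2$ from Lemma~\ref{tulo}:
\[
x^2=(a^2-\overline{b}b)+(b\overline{a}+ba)e_4=(a^2-|b|^2)+b(a+\overline{a})e_4 .
\]
Next, $xy=(ac-\overline{d}b)+(b\overline{c}+da)e_4$, and applying Lemma~\ref{tulo} once more gives
\[
x(xy)=\big(a(ac-\overline{d}b)-\overline{(b\overline{c}+da)}\,b\big)+\big(b\,\overline{(ac-\overline{d}b)}+(b\overline{c}+da)a\big)e_4 .
\]
On the other side, $x^2y=\big((a^2-|b|^2)c-\overline{d}\,b(a+\overline{a})\big)+\big(b(a+\overline{a})\overline{c}+d(a^2-|b|^2)\big)e_4$. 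To match the quaternion parts we expand $\overline{(b\overline{c}+da)}=c\overline{b}+\overline{a}\,\overline{d}$. The terms then pair off as follows: $a\overline{d}b+\overline{a}\,\overline{d}b=(a+\overline{a})\overline{d}b$, where the scalar factor commutes; and $c\overline{b}b=|b|^2c$. The $e_4$-parts are handled the same way, using $b\overline{c}\,\overline{a}+b\overline{c}a=b\overline{c}(a+\overline{a})$ and $b\,\overline{b}d=|b|^2d$.

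For $(xy)y=xy^2$ we proceed identically. Alternatively, we can apply the conjugation rule $\overline{xy}=\overline{y}\,\overline{x}$ proved above to the first identity written for $\overline{y},\overline{x}$, since conjugation is an anti-involution on $\mathbb{O}$. For the flexible law $(xy)x=x(yx)$ we can likewise expand directly. A cleaner route is to linearize the first identity, $x(zy)+z(xy)=(xz+zx)y$, which follows from left alternativity applied to $x+z$. We then also use right alternativity, because left plus right alternativity together imply flexibility via the standard associator argument: the associator $(x,y,z)=(xy)z-x(yz)$ is alternating once it vanishes whenever two arguments coincide in the first two or the last two slots. In particular $(x,y,x)=-(x,x,y)=0$.

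The main obstacle is bookkeeping: there are many quaternionic terms, and the cancellations rely crucially on recognising the scalar combinations $a+\overline{a}$ and $|b|^2$ and moving them freely. If an exact term-matching fails, we reduce to the case where $a$ is pure imaginary, using $\mathbb{R}$-linearity in the real part of $x$. Then $a+\overline{a}=0$ and the computation shortens considerably.
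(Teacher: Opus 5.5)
Your proof is correct and follows the route the paper indicates: the paper gives no written proof, only the remark that the identities can be checked by direct computation with the quaternionic product of Lemma~\ref{tulo}. Your term matching for $x(xy)=x^2y$ goes through exactly, so the fallback in your last paragraph is unnecessary. Your shortcuts for the other two identities are valid and save two further expansions: conjugating the left alternative law applied to $\overline{y},\overline{x}$, using $\overline{uv}=\overline{v}\,\overline{u}$, gives $(xy)y=xy^2$, and linearizing right alternativity gives $(x,y,x)=-(x,x,y)=0$.
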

	
	\begin{prop}[Moufang Laws] For each $x,y,z\in\mathbb{O}$
		\[
		(xy)(zx)=(x(yz))x=x((yz)x).
		\]
	\end{prop}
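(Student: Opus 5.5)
The plan is to verify the Moufang identities by direct computation in the quaternionic form, following the method the paper uses for conjugation and the scalar part. Write $x=a+be_4$, $y=c+de_4$, $z=f+ge_4$ with $a,b,c,d,f,g\in\mathbb{H}$. Every product of two octonions is computed with Lemma~\ref{tulo}, $(p+qe_4)(r+se_4)=(pr-\overline{s}q)+(q\overline{r}+sp)e_4$. Since $\mathbb{H}$ is associative, each side of the identity then becomes a pair of quaternionic polynomials in $a,b,c,d,f,g$ and their conjugates, and the task is to show that the pairs agree.

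To keep the bookkeeping manageable, I would first reduce the amount of computation with multilinearity. Both sides are linear in $y$ and in $z$ and quadratic in $x$. By linearity in $y$ and $z$, it suffices to treat the four cases where $y$ and $z$ are each purely quaternionic ($c$ or $de_4$) or purely of the form $(\cdot)e_4$. For $x$, I would polarize: replace $x$ by $a+be_4$ and expand. This gives terms of type $(a,a)$, $(be_4,be_4)$ and mixed terms, and each type is checked separately. The pure cases follow quickly from Lemma~\ref{lem:paravecprod}. For example, with $x=a$, $y=c$, $z=f$ everything is quaternionic and the identity is just associativity. With $x=a$, $y=de_4$, $z=f$, rule (d) gives $a(de_4)=(da)e_4$, and rules (e) and (d) then reduce both sides to $(\ldots)e_4$ with matching quaternionic coefficients, such as $(fa\,d\,\ldots)$. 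I would expect each term to close using only associativity of $\mathbb{H}$ and the identities $a\overline{a}=|a|^2\in\mathbb{R}$ and $\overline{pq}=\overline{q}\,\overline{p}$.

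The second equality, $(x(yz))x=x((yz)x)$, is the flexible law $(xw)x=x(wx)$ with $w=yz$, which is part of Proposition~\ref{ALT}. So only the first equality, $(xy)(zx)=(x(yz))x$, requires the case computation, and this roughly halves the work.

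The main obstacle I expect is the set of mixed terms in the polarization of $x$, where one factor $x$ contributes $a$ and the other contributes $be_4$. Quaternionic conjugation is then applied inconsistently across the terms, and terms such as $\overline{g}\,\overline{b}\,c\,a$ must cancel against or match terms like $a\,c\,\ldots$. These cancellations are not termwise: they need the real-scalar identities $p+\overline{p}\in\mathbb{R}$ and $p\overline{p}\in\mathbb{R}$, which allow a quantity to commute past other factors. If the direct route becomes too messy, a fallback is Artin's theorem: any two elements generate an associative subalgebra. Alternatively, one can check the identity on basis triples $e_i,e_j,e_k$ using the multiplication table, together with the fact that the Moufang identity is linear in $y$ and $z$ and that the associator is alternating. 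This reduces the $x$-quadratic part to basis elements plus a linearized identity on pairs $e_i,e_l$.
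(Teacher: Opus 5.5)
Your plan follows the route the paper itself indicates: the paper states the Moufang laws without proof and says they can be verified by direct computation with Lemma~\ref{tulo} and Lemma~\ref{lem:paravecprod}. The plan is sound. The second equality is indeed flexibility from Proposition~\ref{ALT} applied to $w=yz$. The mixed terms also close as you expect: for $y=c$, $z=f$ the $e_4$-components of the two sides differ by $b\bigl(\overline{fw}+fw-\overline{wf}-wf\bigr)$ with $w=ac$, and this vanishes because $\SC(fw)=\SC(wf)$.

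Drop the Artin fallback, though. Artin's theorem only concerns subalgebras generated by two elements, so it cannot give an identity in three independent variables, and it is usually derived from the Moufang identities anyway. Your basis-check fallback is legitimate.
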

	We observe that the composition property of the norm follows directly from the Moufang laws:
	
	\begin{prop}
		If $x,y\in\mathbb{O}$, then
		\[
		|xy|=|x||y|.
		\]
	\end{prop}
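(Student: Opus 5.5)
The plan is to justify the formal computation $|xy|^2=(xy)(\overline{y}\,\overline{x})=(x\overline{x})(y\overline{y})$ by replacing each use of associativity with an identity that holds in $\mathbb{O}$. Throughout we use that $|x|^2=x\overline{x}$ is a real scalar, so it commutes and associates with every octonion; that $\overline{xy}=\overline{y}\,\overline{x}$ by the earlier proposition; and that $\overline{x}=2\SC(x)-x$. The last fact means $\overline{x}$ lies in the real span of $1$ and $x$. Consequently any product involving only $x$, $\overline{x}$ and one further element behaves as in the alternative laws of Proposition~\ref{ALT}, which are linear in each slot.

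The first step is the identity
\[
\overline{x}(xy)=(\overline{x}x)y=|x|^2y .
\]
To prove it, write $\overline{x}=2x_0-x$. Then $\overline{x}(xy)=2x_0\,xy-x(xy)$. By the left alternative law $x(xy)=x^2y$, so this equals $(2x_0x-x^2)y=(\overline{x}x)y$. The symmetric identity $(yx)\overline{x}=|x|^2y$ follows in the same way from the right alternative law.

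The second step is
\[
|xy|^2=(xy)\overline{(xy)}=(xy)(\overline{y}\,\overline{x}).
\]
Associativity is needed exactly once here, to regroup $(xy)(\overline{y}\,\overline{x})$ as $x\big((y\overline{y})\overline{x}\big)$. We plan to use the Moufang law $(xy)(zx)=x((yz)x)$, but the outer factors are $x$ and $\overline{x}$ rather than $x$ and $x$. The fix is linearity. Substitute $\overline{x}=2x_0-x$ in the right factor:
\[
(xy)(\overline{y}\,\overline{x})=2x_0(xy)\overline{y}-(xy)(\overline{y}x).
\]
The Moufang law with $z=\overline{y}$ gives $(xy)(\overline{y}x)=x((y\overline{y})x)=|y|^2x^2$. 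For the other term, the right alternative law linearized via $\overline{y}=2y_0-y$ gives $(xy)\overline{y}=x(y\overline{y})=|y|^2x$. Combining the two,
\[
(xy)(\overline{y}\,\overline{x})=|y|^2(2x_0x-x^2)=|y|^2\,x\overline{x}=|x|^2|y|^2 .
\]
Taking nonnegative square roots then yields $|xy|=|x||y|$.

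The main obstacle is this regrouping step. The Moufang law as stated requires the same element $x$ in both outer positions, so the conjugate has to be handled by expressing $\overline{x}$ and $\overline{y}$ through the real-linear relations $\overline{x}=2x_0-x$ and $\overline{y}=2y_0-y$. Once that substitution is made, everything reduces to the alternative laws (Proposition~\ref{ALT}) and one instance of the Moufang law, together with the fact that real scalars commute and associate with everything. The remaining checks are routine.
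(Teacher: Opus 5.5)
Your argument is correct and follows the paper's route. The paper only asserts that the composition property follows from the Moufang laws applied to the formal computation $(xy)(\overline{y}\,\overline{x})=(x\overline{x})(y\overline{y})$, and your substitution $\overline{x}=2x_0-x$, $\overline{y}=2y_0-y$ supplies the detail it omits, reducing everything to one instance of the Moufang law and the right alternative law.
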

	From the definition of the norm it follows directly that every nonzero octonion is invertible:
	
	\begin{prop}
		The octonion algebra $\mathbb{O}$ is a division algebra. If $x\neq 0$, then
		\[
		x^{-1}:=\frac{\overline{x}}{|x|^2},
		\]
		which satisfies $xx^{-1}=x^{-1}x=1$.
	\end{prop}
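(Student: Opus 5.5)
The plan is to verify directly that $x^{-1}:=\overline{x}/|x|^2$ is a two-sided inverse. Division then follows: every nonzero element is invertible, and the multiplicativity of the norm rules out zero divisors.

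\textbf{Well-definedness and reduction.} The norm is defined by $|x|^2=x\overline{x}=\overline{x}x=\sum_{j=0}^7 x_j^2$. This is a nonnegative real number, and it vanishes only when all $x_j=0$. So for $x\neq 0$ the quantity $x^{-1}$ is a real multiple of $\overline{x}$. Real scalars are central and commute with the octonionic product, so
\[
xx^{-1}=\frac{1}{|x|^2}\,x\overline{x},\qquad x^{-1}x=\frac{1}{|x|^2}\,\overline{x}x.
\]
It therefore suffices to check that $x\overline{x}=\overline{x}x=|x|^2$ holds as an identity of octonions, not merely of scalar parts.

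\textbf{Key computation.} Write $x=x_0+\m{x}$ with $\overline{x}=x_0-\m{x}$. Then $x\overline{x}=x_0^2-\m{x}^2$, because $x_0$ is real and the cross terms $-x_0\m{x}+\m{x}x_0$ cancel. The same expression arises for $\overline{x}x$. Expanding $\m{x}^2=\sum_{i,j\ge1}x_ix_je_ie_j$ and using $e_j^2=-1$ together with the anticommutation $e_ie_j=-e_je_i$ for $i\neq j$ (both read off from the multiplication table) gives $\m{x}^2=-\sum_{j=1}^7x_j^2$. Hence $x\overline{x}=\overline{x}x=\sum_{j=0}^7x_j^2=|x|^2$.

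No associativity is needed at this point, since only products of two elements occur. Alternatively one could use Lemma \ref{tulo} with $y=\overline{x}=\overline{a}-be_4$, which gives $x\overline{x}=(a\overline{a}+\overline{b}b)+(-ba+ba)e_4=|a|^2+|b|^2$.

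\textbf{Division property.} To conclude that $\mathbb{O}$ is a division algebra, suppose $xy=0$ with $x\neq0$. The composition property $|xy|=|x||y|$ from the previous proposition gives $|x||y|=0$, so $|y|=0$ and hence $y=0$. Consequently $ax=b$ and $xa=b$ are uniquely solvable for $x\neq 0$, since the maps $y\mapsto xy$ and $y\mapsto yx$ are injective linear maps of a finite-dimensional space and hence bijective. Because $\mathbb{O}$ is not associative, I will state explicitly that the solutions are $a=x^{-1}b$ and $a=bx^{-1}$. This relies on the alternative laws of Proposition \ref{ALT} in linearized form, e.g. $\overline{x}(xb)=(\overline{x}x)b$, which holds because $\overline{x}=2x_0-x$ and $x(xb)=x^2b$.

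The only delicate point is this last one: avoiding any tacit use of associativity. The inverse identity itself is a routine two-factor computation.
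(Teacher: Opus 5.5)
Your proposal is correct and takes essentially the same approach as the paper, which gives no separate proof and simply notes that the inverse follows from $x\overline{x}=\overline{x}x=|x|^2>0$; you additionally supply the two-factor verification of this identity and derive the absence of zero divisors from $|xy|=|x||y|$, both of which the paper leaves implicit. One small slip: your two solutions are listed in the wrong order, since $ax=b$ is solved by $a=bx^{-1}$ (via $(b\overline{x})x=b(\overline{x}x)$) and $xa=b$ by $a=x^{-1}b$.
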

	
	\subsection{Complexification of the Octonion Algebra}
	
	We consider the complexification of the octonion algebra $\mathbb{O}$:
	\[
	\mathbb{O}_{\mathbb{C}} := \mathbb{O} \otimes \mathbb{C}.
	\]
	The elements of the complexified octonion algebra are of the form
	\[
	z = x + i y,
	\]
	where $x, y \in \mathbb{O}$ and, in particular, $ix = xi$ for every $x \in \mathbb{O}$, that means $i$ commutates with each unit $e_j$ for all $j=0,1,\ldots,7$.  The multiplication in the algebra $\mathbb{O}_{\mathbb{C}}$ is naturally inherited from the product in the octonions. The complexified octonion algebra is neither a division algebra nor an alternative algebra. However, the complexified octonions satisfy alternativity with respect to octonionic coefficients, that is,
	\[
	x(xz) = x^2 z \quad \text{and} \quad (zx)x = zx^2
	\]
	for all $x \in \mathbb{O}$ and $z \in \mathbb{O}_{\mathbb{C}}$.
	We define the real and imaginary parts by setting
	\[
	\RE(z) = x \quad \text{and} \quad \IM(z) = y.
	\]
	Moreover, any element $z \in \mathbb{O}_{\mathbb{C}}$ can be decomposed as
	\[
	z = z_0 + \m{z},
	\]
	where $z_0 \in \mathbb{C}$ and $\m{z} \in \mathbb{C}^7$. The scalar and vector parts are defined by
	\[
	\SC(z) = z_0 \quad \text{and} \quad \VEC(z) = \m{z}.
	\]
	The complexified algebra contains more natural involutions than the original octonion algebra. We define the (algebraic) conjugate by setting
	\[
	\overline{z} = z_0 - \m{z}=\overline{x}+i\overline{y}.
	\]
	The complex conjugation is defined as
	\[
	z^* = x - i y.
	\]
	By combining the two previous involutions, we define the Hermitian conjugation as
	\[
	z^+ = \overline{z}^*.
	\]
	The conjugations satisfy the following multiplication rules:
	\[
	\overline{zw} = \overline{w}\;\overline{z}, \quad (zw)^* = z^* w^*, \quad \text{and} \quad (zw)^+ = w^+ z^+.
	\]
	The inner product on the octonion algebra can be extended to a complex inner product $\langle\cdot,\cdot\rangle_{\mathbb{C}}: \mathbb{O}_{\mathbb{C}} \times \mathbb{O}_{\mathbb{C}} \to \mathbb{C}$ by setting
	\[
	\langle z, w \rangle_{\mathbb{C}} := \SC(z w^+).
	\]
	A direct computation yields
	\[
	\langle z, w \rangle_{\mathbb{C}} = \langle x, u \rangle + \langle y, v \rangle + i (\langle y, u \rangle - \langle x, v \rangle),
	\]
	where $z = x + i y$ and $w = u + i v$. The complex inner product satisfies the standard axioms of an inner product:
	\begin{itemize}
		\item[(a)] $\langle z, z \rangle_{\mathbb{C}} > 0$ for all $z \neq 0$, 
		\item[(b)] $\langle z, z \rangle_{\mathbb{C}} = 0$ if and only if $z = 0$,
		\item[(c)] the inner product is conjugate symmetric, that is, $\langle z, w \rangle_{\mathbb{C}} = \langle w, z \rangle_{\mathbb{C}}^*$ for all $z, w \in \mathbb{O}_{\mathbb{C}}$,
		\item[(d)] the inner product is sesquilinear, i.e.,
		\[
		\langle \alpha_1 z_1 + \alpha_2 z_2, w \rangle_{\mathbb{C}} = \alpha_1 \langle z_1, w \rangle_{\mathbb{C}} + \alpha_2 \langle z_2, w \rangle_{\mathbb{C}}
		\]
		and
		\[
		\langle w, \alpha_1 z_1 + \alpha_2 z_2 \rangle_{\mathbb{C}} = \alpha_1^* \langle w, z_1 \rangle_{\mathbb{C}} + \alpha_2^* \langle w, z_2 \rangle_{\mathbb{C}}
		\]
		for all $z_1, z_2, w \in \mathbb{O}_{\mathbb{C}}$ and $\alpha_1, \alpha_2 \in \mathbb{C}$.
	\end{itemize}
	Moreover,
	\[
	\langle z^*, w^* \rangle_{\mathbb{C}} = \langle z, w \rangle_{\mathbb{C}}^* = \langle w, z \rangle_{\mathbb{C}}.
	\]
	We also observe that
	\[
	\langle iz, iw \rangle_{\mathbb{C}} = \langle z, w \rangle_{\mathbb{C}},
	\]
	which implies that the complex inner product defines a Kähler structure on the complexified octonion algebra $\mathbb{O}_{\mathbb{C}}$. As a consequence of sesquilinearity, we obtain the more general result
	\[
	\langle sz, sw \rangle_{\mathbb{C}} = \langle z, w \rangle_{\mathbb{C}}
	\]
	for all unit complex numbers $s \in S^1$.
	This invariance means that the inner product does not depend on the complex phase factor $s \in S^1$, so the complexified inner product is $U(1)$-invariant. This is a typical feature of Kähler geometry, where the complex structure, metric properties, and symplectic structure coexist in harmony.
	
	\begin{prop} Let $z=x+iy\in\mathbb{O}_{\mathbb{C}}$. Then 
		\[
		z\overline{z}=\overline{z}z=|x|^2-|y|^2+2i\langle x,y\rangle.
		\]    
	\end{prop}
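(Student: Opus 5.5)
We expand the product using only bilinearity over $\mathbb{C}$ and the fact that $i$ is central. Write $z=x+iy$ with $x,y\in\mathbb{O}$, so that $\overline{z}=\overline{x}+i\overline{y}$. Since $i$ commutes with every octonion and $i^2=-1$, and multiplication in $\mathbb{O}_{\mathbb{C}}$ is the $\mathbb{C}$-bilinear extension of the octonionic product, we get
\[
z\overline{z}=(x+iy)(\overline{x}+i\overline{y})=x\overline{x}-y\overline{y}+i\,(x\overline{y}+y\overline{x}).
\]
No associativity is needed, because every term is a product of just two octonions multiplied by a complex scalar.

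Next we identify each term using the real octonionic results already established. By the definition of the norm, $x\overline{x}=|x|^2$ and $y\overline{y}=|y|^2$. By the projection-formula expression of the inner product, $\langle x,y\rangle=\frac{1}{2}(x\overline{y}+y\overline{x})$, so $x\overline{y}+y\overline{x}=2\langle x,y\rangle$. Substituting gives $z\overline{z}=|x|^2-|y|^2+2i\langle x,y\rangle$.

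For $\overline{z}z$ we expand in the same way:
\[
\overline{z}z=\overline{x}x-\overline{y}y+i(\overline{x}y+\overline{y}x).
\]
We use $\overline{x}x=|x|^2$ and $\overline{y}y=|y|^2$. For the mixed term, $\overline{x}y+\overline{y}x$ is real and equals $2\,\SC(\overline{y}x)$, because $\overline{\overline{y}x}=\overline{x}y$ by the anti-multiplicativity of conjugation. The Corollary on exchanging order in the scalar part then gives $\SC(\overline{y}x)=\SC(x\overline{y})=\langle x,y\rangle$. So $\overline{z}z$ equals the same expression.

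The only point that needs care is justifying that the mixed sums are real scalars. That follows from the conjugation rule $\overline{xy}=\overline{y}\,\overline{x}$ proved earlier: each mixed sum has the form $w+\overline{w}=2\,\SC(w)$.
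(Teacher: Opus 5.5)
Your proof is correct and follows the same route as the paper: you expand $(x+iy)(\overline{x}+i\overline{y})$ using the centrality of $i$, then identify $x\overline{x}-y\overline{y}$ and $x\overline{y}+y\overline{x}$ through the norm and the projection formula for $\langle x,y\rangle$. Your explicit treatment of $\overline{z}z$, via $\overline{\overline{y}x}=\overline{x}y$ and the corollary $\SC(\overline{y}x)=\SC(x\overline{y})$, fills in a step the paper only asserts.
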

	\begin{proof}
		We compute directly:
		\begin{align*}
			z\overline{z}&=(x+iy)(\overline{x}+i\overline{y}) \\
			&=x\overline{x}-y\overline{y}+i(y\overline{x}+x\overline{y})\\
			&=|x|^2-|y|^2+2i\langle x,y\rangle.
		\end{align*}
	\end{proof}
	Note that
	\begin{align*}
		|x|^2 - |y|^2 &= \RE(z\overline{z}),\\
		\langle x, y \rangle &= \frac{1}{2} \IM(z\overline{z}).
	\end{align*}
	Together, these formulas define a combined Euclidean–hyperbolic structure: the real part provides a Lorentz-type metric, while the imaginary part gives a Euclidean metric.  
	Together they form the foundation of a complex Hermitian metric. This reflects the fundamental idea of Kähler geometry:
	\begin{itemize}
		\item The metric incorporates both symplectic (imaginary part) and Riemannian (real part) components.
		\item The complex structure unifies these two components into a harmonious whole.
		\item $U(1)$-invariance emphasizes the central role of the complex structure.
	\end{itemize}
	In practice, this means that the complexified octonion algebra provides a geometric framework in which conformal geometry, symplectic geometry, and complex geometry naturally intersect.\\
	\\
	Let us say a few more words about the automorphism groups of the octonions and their complexifications.

	\begin{rem}
		If one compares the algebra-structure-preserving automorphisms of the real octonions and their complexification, the following facts emerge:
		\begin{itemize}
			\item $\mathrm{Aut}(\mathbb{O}_\mathbb{C}) \cong G_2(\mathbb{C})$, where $G_2(\mathbb{C})$ is the $14$-dimensional complex simple Lie group of type $G_2$,
			\item $\mathrm{Aut}(\mathbb{O}) \cong G_2(\mathbb{R})$, the compact real form of $G_2$.
		\end{itemize}
		These isomorphisms are consequences of the general theory; see \cite{SpringerVeldkamp2000}. In particular, Theorem 2.3.5 shows that the automorphism group of any octonion algebra over an arbitrary field is a simple algebraic group of type $G_2$, and Proposition 2.4.6 asserts that this group is defined over the base field. For the real case, the historical remarks in §2.5 explain that the compact real form of $G_2$ indeed appears as $\mathrm{Aut}(\mathbb{O})$. Interesting details on the aotmorphisms, isotropy groups and invariants of the octonions and complexified octonions have also been studied profoundly in the recent work by C. Bisi and J. Winkelmann \cite{BW} Section~9 and in Section~11, respectively.  
	\end{rem}

	\subsection{Octonion analysis}
	In octonion analysis, a central tool is the Cauchy–Riemann operator
	\[
	\partial_x=\partial_{x_0}+e_1\partial_{x_1}+\cdots+e_7\partial_{x_7}
	\]
	and its conjugate
	\[
	\partial_{\overline{x}}=\partial_{x_0}-e_1\partial_{x_1}-\cdots-e_7\partial_{x_7}
	\]
	The aforementioned operators act on differentiable octonion-valued functions $f:\Omega\to \mathbb{O}$, that is,
	\[
	f=f_0+f_1e_1+\cdots+f_7e_7,
	\]
	where $f_j:\Omega\to\mathbb{R}$ is differentiable, and $\Omega$ is an open subset of the space $\mathbb{R}^8\cong \mathbb{O}$. Octonions can also be considered bi-axially, as functions of two quaternionic variables $f=f(u,v)$, where $u,v\in\mathbb{H}$ such that $x=u+ve_4$. In that case, the Cauchy–Riemann operator can be written as
	\[
	\partial_ x=\partial_u+\partial_ve_4,
	\]
	where $\partial_u$ and $\partial_v$ are Cauchy–Riemann operators in the quaternionic algebra. Correspondingly, the conjugate is given by
	\[
	\partial_{\overline{x}}=\partial_{\overline{u}}-\partial_ve_4.
	\]
	This observation shows that the Cauchy–Riemann operator is a natural operator that doubles in the Cayley–Dickson process. For practical calculations, the following lemma is essential.
	
	\begin{lem}[\cite{KO}]\label{lem:diffrules}
		Let $f\colon\Omega\subset\mathbb{H}\to\mathbb{H}$ be a differentiable function and $\partial_u=\partial_{u_0}+e_1\partial_{u_1}+e_2\partial_{u_2}+e_3\partial_{u_3}$ the quaternionic Cauchy–Riemann operator. Then we have
		\begin{enumerate}[(a)]
			\item $\partial_u(fe_4)=(f\partial_u)e_4$,
			\item $(\partial_u e_4)f=(\partial_u\overline{f})e_4$,
			\item $(\partial_u e_4)(fe_4)=-\overline{f}\partial_u$.
			\item $(fe_4)\partial_u=(f\partial_{\overline{u}})e_4$,
			\item $f(\partial_u e_4)=(\partial_uf)e_4$,
			\item $(fe_4)(\partial_u e_4)=-\partial_{\overline{u}}f$.
		\end{enumerate}
	\end{lem}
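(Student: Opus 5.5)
The plan is to reduce every identity in Lemma~\ref{lem:diffrules} to the purely algebraic rules of Lemma~\ref{lem:paravecprod}, applied termwise. Write the quaternionic operator as $\partial_u=\sum_{k=0}^3 e_k\partial_{u_k}$ with $e_0=1$, and $f=\sum_{m=0}^3 f_m e_m$. Each of $\partial_u$, $f\partial_u$, $\partial_u f$ and so on is then a finite sum $\sum_{k,m}(\text{unit}\cdot\text{unit})\,\partial_{u_k}f_m$. The real derivatives $\partial_{u_k}$ commute with all octonionic units and act only on the real coefficient functions $f_m$. We therefore fix the convention that, after expanding, the real scalar $\partial_{u_k}f_m$ is pulled out, including when the operator is written on the right ($f\partial_u:=\sum_k f_{,k}\,\ldots$ with the unit placed as indicated). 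The product of the units $e_k$, $e_m$, $e_4$ is then handled by Lemma~\ref{lem:paravecprod} with $a=e_k$, $b=e_m$, which are quaternions. This is legitimate because Lemma~\ref{lem:paravecprod} is bilinear over $\mathbb{R}$, so it extends from constant quaternions to the formal sums above.

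The key steps go in order as follows. (a) $\partial_u(fe_4)=\sum_{k,m}(\partial_{u_k}f_m)\,e_k(e_me_4)$. By Lemma~\ref{lem:paravecprod}(d), $e_k(e_me_4)=(e_me_k)e_4$, which reassembles into $(f\partial_u)e_4$. (b) $(\partial_u e_4)f$ gives terms $(e_ke_4)e_m$, which equal $(e_k\overline{e_m})e_4$ by (e). Summing over $m$ turns $\overline{e_m}$ into the conjugate $\overline f$, giving $(\partial_u\overline f)e_4$. (c) $(e_ke_4)(e_me_4)=-\overline{e_m}e_k$ by (f), which yields $-\overline f\partial_u$. (d) $(e_me_4)e_k=(e_m\overline{e_k})e_4$ by (e). Conjugating the operator's unit gives $\partial_{\overline u}$, hence $(f\partial_{\overline u})e_4$. (e) $e_m(e_ke_4)=(e_ke_m)e_4$ by (d), giving $(\partial_u f)e_4$. (f) $(e_me_4)(e_ke_4)=-\overline{e_k}e_m$ by (f), giving $-\partial_{\overline u}f$.

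The main obstacle is bookkeeping rather than substance. In each case I must track carefully which factor carries the conjugation after applying Lemma~\ref{lem:paravecprod}: the operator's unit (producing $\partial_{\overline u}$) or the function's unit (producing $\overline f$). I must also track on which side of $f$ the operator ends up. I will verify each reassembly by checking a single representative term, e.g.\ $k=1$, $m=2$, against the multiplication table. This also confirms the sign conventions of Lemma~\ref{lem:paravecprod} as used with this particular table.
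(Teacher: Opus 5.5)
Your proposal is correct. Each of the six reductions uses the right case of Lemma~\ref{lem:paravecprod}, namely (d), (e), (f), (e), (d), (f) in that order, and in each case the conjugation lands on the correct factor: the function's unit gives $\overline f$ in (b) and (c), and the operator's unit gives $\partial_{\overline u}$ in (d) and (f). The paper gives no proof of its own here and simply cites \cite{KO}. The termwise reduction to the algebraic rules that you describe is the intended direct argument; you can even skip expanding $f$ by applying Lemma~\ref{lem:paravecprod} with $a=e_k$ and the quaternion $b=\partial_{u_k}f$.
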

	
	With the help of the above lemma, computations involving the Cauchy–Riemann operators of octonions can be carried out using quaternions. For example, if $f=g+he_4$ and as above $\partial_x=\partial_u+\partial_ve_4$, we obtain
	\begin{align}
		\partial_x f&=(\partial_u+\partial_ve_4)(g+he_4)\nonumber\\
		&=\partial_ug+\partial_u(he_4)+(\partial_ve_4)g+(\partial_ve_4)(he_4)\nonumber\\
		&=\partial_ug+(h\partial_u)e_4+(\partial_v\overline{g})e_4-\overline{h}\partial_v,\label{BICR}
	\end{align}
	
	Due to the alternativity of the octonion algebra, the Cauchy–Riemann operator factors the Laplace operator, i.e.,
	\begin{align}
		\partial_{x}\partial_{\overline{x}}=\partial_{\overline{x}}\partial_{x}=\Delta, \label{FUKTORISOINTI}  
	\end{align}
	where
	\[
	\Delta=\partial_{x_0}^2+\partial_{x_1}^2+\cdots+\partial_{x_7}^2
	\]
	is the Laplace operator in the Euclidean space $\mathbb{R}^8$. The above factorization formula can also be proven directly by computation using Lemma~\ref{lem:diffrules}. The following function class is central in octonion analysis.
	
	\begin{defn}
		Let $\Omega\subset\mathbb{R}^8$ be an open set and $f:\Omega\to\mathbb{O}$ a differentiable function. The function $f$ is (left) monogenic if
		\[
		\partial_xf=0
		\]
		in the domain $\Omega$. Similarly, $f$ is right monogenic if
		\[
		f\partial_x=0
		\]
		in the domain $\Omega$.
	\end{defn}
	
	A few immediate observations can be made about octonionic monogenic functions. First, from equation (\ref{BICR}) we see that monogenic functions satisfy a generalized quaternionic Cauchy–Riemann system:
	\[
	\partial_xf=0\qquad \Leftrightarrow\qquad \begin{cases}
		\partial_ug=\overline{h}\partial_v,\\
		h\partial_u=-\partial_v\overline{g}.
	\end{cases}
	\]
	when $f=g+he_4$.
	
	From the factorization formula (\ref{FUKTORISOINTI}) it follows that every monogenic function is also harmonic, i.e., if $\partial_xf=0$, then $\Delta f=0$.
	
	Moreover, if $f:\Omega\to\mathbb{O}$ is a harmonic function, i.e., $\Delta f=0$, then $\partial_{\overline{x}}f$ is monogenic by formula (\ref{FUKTORISOINTI}). This shows that the class of monogenic functions is rich, and one can speak of a nontrivial function theory.

	\section{Cauchy–Riemann Operators in Complexified Octonionic Analysis}
	In complexified octonionic analysis, one studies functions of the form
	\[
	f:\Omega \subset \mathbb{R}^{16} \to \mathbb{O}_{\mathbb{C}}.
	\]
	The variable in the function is considered to be a complexified octonion $z$, or equivalently, the function is often treated as a function of two octonionic variables:
	\[
	f = f(z) = f(x, y),
	\]
	where $z = x + i y$. Functions taking values in complex octonions naturally decompose as
	\[
	f = g + i h,
	\]
	where $g,h: \Omega \subset \mathbb{R}^{16} \to \mathbb{O}$ are octonion-valued functions of two octonionic variables. We define the complex Cauchy–Riemann operator by setting
	\[
	D = \frac{\partial}{\partial z_0} + e_1 \frac{\partial}{\partial z_1} + \cdots + e_7 \frac{\partial}{\partial z_7},
	\]
	where
	\[
	\frac{\partial}{\partial z_j} = \frac{\partial}{\partial x_j} + i \frac{\partial}{\partial y_j},
	\]
	for $j = 0, 1, \dots, 7$. It follows that
	\[
	D = \partial_x + i \partial_y,
	\]
	where $\partial_x$ and $\partial_y$ are the usual (real) octonionic Cauchy–Riemann operators.
	
	\begin{defn}
		A continuously differentiable function $f: \Omega \subset \mathbb{R}^{16} \to \mathbb{O}_{\mathbb{C}}$ is said to be left monogenic if $Df = 0$, and right monogenic if $fD = 0$.
	\end{defn}
	
	In what follows, we will consider only left monogenic functions, referred to simply as monogenic.
	
	\begin{prop}
		If $f = g + i h$ is differentiable, then the condition
		\[
		Df = 0
		\]
		is equivalent to
		\[
		\begin{cases}
			\partial_x g = \partial_y h, \\
			\partial_y g = -\partial_x h.
		\end{cases}
		\]
	\end{prop}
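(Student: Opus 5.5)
We expand $Df$ directly and split it into real and imaginary parts with respect to the complex unit $i$. Write $D=\partial_x+i\partial_y$ and $f=g+ih$, where $g,h$ are $\mathbb{O}$-valued and $\partial_x,\partial_y$ are the real octonionic Cauchy–Riemann operators. The complex unit $i$ commutes with every $e_j$ and with all real partial derivatives $\partial_{x_j},\partial_{y_j}$, and $i^2=-1$. Using bilinearity of the octonionic product (no associativity is needed, since only one octonionic multiplication occurs in each term), we obtain
\[
Df=(\partial_x+i\partial_y)(g+ih)=\partial_x g+i\,\partial_x h+i\,\partial_y g+i^2\,\partial_y h
=(\partial_x g-\partial_y h)+i\,(\partial_y g+\partial_x h).
\]
Each term should be justified componentwise. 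For instance, $\partial_x(ih)=\sum_j e_j\partial_{x_j}(ih)=i\sum_j e_j\partial_{x_j}h$, because $i$ is a scalar from $\mathbb{C}$ commuting with $e_j$.

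Next we use that $\mathbb{O}_{\mathbb{C}}=\mathbb{O}\oplus i\,\mathbb{O}$ as a real vector space, so every $z\in\mathbb{O}_{\mathbb{C}}$ has a unique decomposition $z=\RE(z)+i\,\IM(z)$ with $\RE(z),\IM(z)\in\mathbb{O}$. Since $g,h$ are $\mathbb{O}$-valued, both $\partial_x g-\partial_y h$ and $\partial_y g+\partial_x h$ are $\mathbb{O}$-valued. Hence $Df=0$ holds if and only if both vanish, i.e. $\partial_x g=\partial_y h$ and $\partial_y g=-\partial_x h$. The converse direction is immediate from the same identity.

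No step is a genuine obstacle. The only point needing care is that the action of $e_j$ on $ih$ commutes with $i$; this is the defining property $ix=xi$ of the complexification, applied pointwise.
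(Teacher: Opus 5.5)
Your proposal is correct and follows the paper's proof: the paper likewise just expands $Df=(\partial_x+i\partial_y)(g+ih)=\partial_x g-\partial_y h+i(\partial_x h+\partial_y g)$ and reads off the two equations. Your remark that the decomposition $z=\RE(z)+i\,\IM(z)$ with $\RE(z),\IM(z)\in\mathbb{O}$ is unique makes explicit the step the paper leaves implicit.
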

	\begin{proof}
		We compute
		\begin{align*}
			Df &= (\partial_x + i \partial_y)(g + i h) \\
			&= \partial_x g - \partial_y h + i(\partial_x h + \partial_y g).
		\end{align*}
	\end{proof}
	
	We define the Laplace operator corresponding to the real metric as
	\[
	\Delta = \Delta_x + \Delta_y,
	\]
	and the ultrahyperbolic Laplace operator as
	\[
	\widetilde{\Delta} = \Delta_x - \Delta_y.
	\]
	The Cauchy–Riemann operator factors the Euclidean Laplace operator up to a second-order operator of octonionic type. As a result, we obtain the following Lichnerowicz--Weitzenb\"ock type formula.
	
	\begin{prop} The Cauchy–Riemann operator satisfies 
		\[
		D D^+ = D^+ D = \Delta + i(\partial_y \partial_{\overline{x}} - \partial_x \partial_{\overline{y}}).
		\]
	\end{prop}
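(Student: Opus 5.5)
The plan is to expand both products directly, starting from the explicit form of the Hermitian conjugate operator. By definition $z^+=\overline{z}^*$, so applying octonionic conjugation and then complex conjugation to $D=\partial_x+i\partial_y$ gives
\[
D^+=\partial_{\overline{x}}-i\partial_{\overline{y}} .
\]
Since $i$ commutes with every $e_j$ and with all real partial derivatives, I will expand $D(D^+f)$ for a $C^2$ function $f:\Omega\to\mathbb{O}_{\mathbb{C}}$ by bilinearity, keeping the octonionic order of composition:
\[
D(D^+f)=\partial_x(\partial_{\overline{x}}f)+\partial_y(\partial_{\overline{y}}f)+i\bigl(\partial_y(\partial_{\overline{x}}f)-\partial_x(\partial_{\overline{y}}f)\bigr).
\]
Here $i\partial_y$ against $-i\partial_{\overline{y}}$ produces $+\partial_y\partial_{\overline{y}}$.

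Next I apply the real factorization (\ref{FUKTORISOINTI}) separately in the $x$- and $y$-variables, componentwise on $f=g+ih$. It holds for $\mathbb{O}_{\mathbb{C}}$-valued $f$ because it only uses alternativity with octonionic coefficients, in the form $e_j(e_kf)+e_k(e_jf)=-2\delta_{jk}f$, which is available in $\mathbb{O}_{\mathbb{C}}$. This gives $\partial_x\partial_{\overline{x}}=\Delta_x$ and $\partial_y\partial_{\overline{y}}=\Delta_y$, hence the stated formula for $DD^+$.

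For $D^+D$ the same expansion gives
\[
D^+D=\Delta_x+\Delta_y+i\bigl(\partial_{\overline{x}}\partial_y-\partial_{\overline{y}}\partial_x\bigr).
\]
It remains to identify this mixed term with $\partial_y\partial_{\overline{x}}-\partial_x\partial_{\overline{y}}$. To compare them I will split $\partial_x=\partial_{x_0}+A_x$ with $A_x=\sum_{j\ge1}e_j\partial_{x_j}$, and similarly for $y$. Both mixed terms then have the common part $2(A_y\partial_{x_0}-A_x\partial_{y_0})$. The remaining parts are $\pm(A_xA_y-A_yA_x)$, with opposite signs in the two expressions, where
\[
(A_xA_y-A_yA_x)f=\sum_{j\neq k}\partial_{x_j}\partial_{y_k}\bigl(e_j(e_kf)-e_k(e_jf)\bigr).
\]
So the equality $DD^+=D^+D$ reduces to showing that this commutator term vanishes, or can be absorbed.

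\textbf{Main obstacle.} I expect this commutator step to be the real difficulty. Using $e_j(e_kf)=-e_k(e_jf)$ for $j\ne k$ (alternativity again), the term becomes $2\sum_{j\ne k}\partial_{x_j}\partial_{y_k}\,e_j(e_kf)$. This is not obviously zero, since $\partial_{x_j}\partial_{y_k}$ is not symmetric in $j,k$. I would first test it on $f=1$ and on a monomial such as $x_1y_2$, to decide whether the identity holds literally or only in a weaker sense. Possible weaker readings are symmetrized cross terms, scalar-valued $f$, or a reading of the cross term as an operator product rather than an iterated action. The formula $DD^+=\Delta+i(\partial_y\partial_{\overline{x}}-\partial_x\partial_{\overline{y}})$ itself should follow cleanly from the first two steps.
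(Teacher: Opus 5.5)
Your derivation of $DD^+=\Delta+i(\partial_y\partial_{\overline{x}}-\partial_x\partial_{\overline{y}})$ is the paper's proof. You expand $(\partial_x+i\partial_y)(\partial_{\overline{x}}f-i\partial_{\overline{y}}f)$ and use the real factorization in each variable. Your justification that this factorization still holds for $\mathbb{O}_{\mathbb{C}}$-valued $f$, via $e_j(e_kf)+e_k(e_jf)=-2\delta_{jk}f$, is correct. That computation is \emph{all} the paper's proof checks: the equality $DD^+=D^+D$ is asserted there without any computation.

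The step you flag as the main obstacle cannot be completed, because it is false. Your reduction is correct and gives
\[
DD^+f-D^+Df=2i(A_xA_y-A_yA_x)f=4i\sum_{\substack{j,k=1\\ j\neq k}}^{7}\partial_{x_j}\partial_{y_k}\,e_j(e_kf),
\]
which does not vanish. Running your own test on the scalar function $f=x_1y_2$ settles it:
\[
D^+f=-e_1y_2+i\,e_2x_1,\qquad D(D^+f)=e_1(ie_2)+i\,e_2(-e_1)=2ie_3,
\]
\[
Df=e_1y_2+i\,e_2x_1,\qquad D^+(Df)=-e_1(ie_2)-i\,(-e_2)e_1=-2ie_3.
\]
The obstruction is the cross-product term $\sum_{j\neq k}e_je_k\,\partial_{x_j}\partial_{y_k}$. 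It already lives in the quaternionic subalgebra, so it comes from noncommutativity, not nonassociativity.

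None of your weaker readings removes it. The counterexample is scalar-valued, and for scalar $f$ iterated action and operator product coincide. At the level of symbols, $zz^+\neq z^+z$ in $\mathbb{O}_{\mathbb{C}}$: for $z=e_1+ie_2$ one gets $2+2ie_3$ versus $2-2ie_3$.

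What can actually be proved is the pair of formulas
\[
DD^+=\Delta+i(\partial_y\partial_{\overline{x}}-\partial_x\partial_{\overline{y}}),\qquad D^+D=\Delta+i(\partial_{\overline{x}}\partial_y-\partial_{\overline{y}}\partial_x).
\]
Their mixed terms differ by the operator above. The claim $DD^+=D^+D$ in the statement should be dropped or replaced by the separate formula for $D^+D$.
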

	\begin{proof}
		Using the alternativity of octonions, we compute:
		\begin{align*}
			D D^+ f &= (\partial_x + i \partial_y)(\partial_{\overline{x}} f - i \partial_{\overline{y}} f) \\
			&= \Delta_x f + \Delta_y f + i\big(\partial_y (\partial_{\overline{x}} f) - \partial_x (\partial_{\overline{y}} f)\big).
		\end{align*}
	\end{proof}
	
	It follows that the component functions of monogenic functions are no longer themselves monogenic. This marks a major distinction from (real) octonionic analysis. Moreover, the operator $\partial_y \partial_{\overline{x}} - \partial_x \partial_{\overline{y}}$ is octonion-valued, and thus the analysis of the Cauchy–Riemann operator $D$ becomes complicated due to this decomposition.\\
	\\
	By changing the involution in the decomposition, we obtain the following Lichnerowicz–Weitzenböck type formula, where the whole operator is scalar-valued.
	However, we observe that in this case one factors the ultrahyperbolic Laplace operator.
	
	\begin{prop} The Cauchy–Riemann operator satisfies 
		\[
		D \overline{D} = \overline{D} D = \widetilde{\Delta} + 2i \langle \partial_x, \partial_y \rangle.
		\]
	\end{prop}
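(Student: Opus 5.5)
The plan is to expand the product $D\overline{D}$ exactly as in the proof of the preceding proposition for $DD^+$, with the algebraic conjugate in place of the Hermitian one. First I would record that $\overline{D}=\partial_{\overline{x}}+i\partial_{\overline{y}}$. This holds because the algebraic conjugation $\overline{z}=\overline{x}+i\overline{y}$ does not touch the complex unit $i$, so conjugating $\partial_{z_j}=\partial_{x_j}+i\partial_{y_j}$ octonionically gives $\partial_{\overline{x}}+i\partial_{\overline{y}}$. Since $i$ commutes with every $e_j$ and with all real partial derivatives, applying $D\overline{D}$ to a $C^2$ function $f$ gives
\[
D(\overline{D}f)=\partial_x(\partial_{\overline{x}}f)-\partial_y(\partial_{\overline{y}}f)+i\bigl(\partial_x(\partial_{\overline{y}}f)+\partial_y(\partial_{\overline{x}}f)\bigr).
\]
By (\ref{FUKTORISOINTI}), applied in the $x$- and $y$-variables separately, the real part is $\Delta_x f-\Delta_y f=\widetilde{\Delta}f$.

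The main step is the mixed term, and I expect it to be the only real obstacle. I would show that $\partial_x(\partial_{\overline{y}}f)+\partial_y(\partial_{\overline{x}}f)=2\sum_{j=0}^7\partial_{x_j}\partial_{y_j}f=2\langle\partial_x,\partial_y\rangle f$. Writing $\partial_x=\sum_j e_j\partial_{x_j}$ and $\partial_{\overline{y}}=\sum_k\overline{e_k}\partial_{y_k}$ gives
\[
\partial_x(\partial_{\overline{y}}f)+\partial_y(\partial_{\overline{x}}f)=\sum_{j,k}\bigl(e_j(\overline{e_k}f)+e_k(\overline{e_j}f)\bigr)\partial_{x_j}\partial_{y_k}.
\]
Here the order of the indices is exchanged in the second term, which is legitimate because the partial derivatives commute.

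It therefore suffices to prove the identity $e_j(\overline{e_k}a)+e_k(\overline{e_j}a)=2\delta_{jk}a$ for all $a\in\mathbb{O}$, and then extend it complex-linearly to $\mathbb{O}_{\mathbb{C}}$. This is the polarized form of $x(\overline{x}a)=|x|^2a$ for $x\in\mathbb{O}$, which follows from alternativity (Proposition~\ref{ALT}). Write $\overline{x}=2x_0-x$; then $x(\overline{x}a)=2x_0xa-x(xa)=(2x_0x-x^2)a=(x\overline{x})a$. Now substitute $x=\sum_j s_je_j$ with real $s_j$ and compare the coefficients of $s_js_k$; this yields exactly the required identity. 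Alternatively, one can check the identity case by case. For $j=k$ it reduces to $e_j(\overline{e_j}a)=a$, which is alternativity with $e_j^2=-1$ when $j\ge1$. For $j\neq k$, the case where one index is $0$ gives $e_j a-e_ja=0$. The case $j,k\ge1$ is the left-alternative law $e_j(e_ka)+e_k(e_ja)=(e_je_k+e_ke_j)a=0$, which is the linearization of $x(xa)=x^2a$.

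Finally, the identity $\overline{D}D=D\overline{D}$ follows by the same computation with the roles of the conjugated and unconjugated units interchanged, now using $\overline{x}(xa)=|x|^2a$. The real part again reduces to $\partial_{\overline{x}}\partial_x-\partial_{\overline{y}}\partial_y=\Delta_x-\Delta_y$ by (\ref{FUKTORISOINTI}), and the mixed term gives $2\langle\partial_x,\partial_y\rangle$.
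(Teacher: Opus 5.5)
Your proof is correct, and it departs from the paper exactly at the step you identified as the main one. Both arguments expand $D\overline{D}f$ into $\Delta_x f-\Delta_y f$ plus $i$ times the mixed term $\partial_y(\partial_{\overline{x}}f)+\partial_x(\partial_{\overline{y}}f)$. The paper evaluates the mixed term by splitting $x=u+ve_4$, $y=s+te_4$, $f=G+He_4$ and computing both summands in quaternionic form with Lemma~\ref{lem:diffrules}. The cross terms, such as $\partial_s\overline{H}\partial_v$ and $\partial_u\overline{H}\partial_t$, then cancel in pairs. The surviving terms combine through quaternionic identities like $\partial_s\partial_{\overline{u}}+\partial_u\partial_{\overline{s}}=2\langle\partial_s,\partial_u\rangle$.

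You instead reduce everything to the single constant-coefficient identity $e_j(\overline{e_k}a)+e_k(\overline{e_j}a)=2\delta_{jk}a$. You obtain it by polarizing $x(\overline{x}a)=|x|^2a$, which is a one-line consequence of left alternativity.

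Your route has several advantages:
\begin{itemize}
\item It is shorter and does not depend on the particular multiplication table or on the Cayley--Dickson splitting.
\item It uses nothing beyond Proposition~\ref{ALT} and avoids the term-by-term quaternionic bookkeeping.
\item It actually proves $\overline{D}D=D\overline{D}$ via the companion identity $\overline{x}(xa)=|x|^2a$, whereas the paper's proof only computes $D\overline{D}$.
\end{itemize}
In exchange, the paper's computation stays inside the bi-quaternionic toolkit it uses throughout (compare (\ref{BICR})) and displays the $G$- and $H$-components of the mixed term explicitly.

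One cosmetic point: in your display, the factor $\partial_{x_j}\partial_{y_k}$ should act on $f$ inside the products, i.e.\ $e_j(\overline{e_k}\,\partial_{x_j}\partial_{y_k}f)$. This is harmless, since real partial derivatives commute with left multiplication by constant octonions.
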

	\begin{proof}
		Using octonionic alternativity, we compute:
		\begin{align*}
			D \overline{D} f &= (\partial_x + i \partial_y)(\partial_{\overline{x}} f + i \partial_{\overline{y}} f) \\
			&= \Delta_x f - \Delta_y f + i\big(\partial_y (\partial_{\overline{x}} f) + \partial_x (\partial_{\overline{y}} f)\big).
		\end{align*}
		We now simplify the latter term. Let $x = u + v e_4$ and $y = s + t e_4$. Without loss of generality, suppose $f = G + H e_4$ is an octonion-valued function where $G$ and $H$ are quaternion-valued. Applying Lemma~\ref{lem:diffrules}, we compute:
		\begin{align*}
			\partial_x f &= \partial_u G - \overline{H} \partial_v + \big(H \partial_u + \partial_v \overline{G}\big) e_4,
		\end{align*}
		and since $\overline{x} = \overline{u} - v e_4$, we get:
		\begin{align*}
			\partial_{\overline{x}} f &= \partial_{\overline{u}} G + \overline{H} \partial_v + \big(H \partial_{\overline{u}} - \partial_v \overline{G} \big) e_4.
		\end{align*}
		Now,
		\begin{align*}
			\partial_y (\partial_{\overline{x}} f)
			&= \partial_s (\partial_{\overline{u}} G + \overline{H} \partial_v)
			- \overline{(H \partial_{\overline{u}} - \partial_v \overline{G})} \partial_t \\
			&\quad + \big((H \partial_{\overline{u}} - \partial_v \overline{H}) \partial_s + \partial_t \overline{(\partial_{\overline{u}} G + \overline{H} \partial_v)} \big) e_4 \\
			&= \partial_s \partial_{\overline{u}} G + \partial_s \overline{H} \partial_v - \partial_u \overline{H} \partial_t + G \partial_{\overline{v}} \partial_t \\
			&\quad + \big(H \partial_{\overline{u}} \partial_s - \partial_v \overline{G} \partial_s + \partial_t \overline{G} \partial_u + \partial_t \partial_{\overline{v}} H \big) e_4,
		\end{align*}
		and similarly:
		\begin{align*}
			\partial_x (\partial_{\overline{y}} f)
			&= \partial_u \partial_{\overline{s}} G + \partial_u \overline{H} \partial_t - \partial_s \overline{H} \partial_v + G \partial_{\overline{t}} \partial_v \\
			&\quad + \big(H \partial_{\overline{s}} \partial_u - \partial_t \overline{G} \partial_u + \partial_v \overline{G} \partial_s + \partial_v \partial_{\overline{t}} H \big) e_4.
		\end{align*}
		Thus,
		\begin{align*}
			\partial_y (\partial_{\overline{x}} f) + \partial_x (\partial_{\overline{y}} f)
			&= 2 \langle \partial_s, \partial_u \rangle G + 2 \langle \partial_v, \partial_t \rangle G + 2\big( \langle \partial_u, \partial_s \rangle H + \langle \partial_t, \partial_v \rangle H \big) e_4 \\
			&=2\langle\partial_s,\partial_{u}\rangle f+2 \langle\partial_{v},\partial_t\rangle f
			= 2 \langle \partial_x, \partial_y \rangle f.
		\end{align*}
	\end{proof}
	
	If $f$ is a solution to the equation $D \overline{D} f = 0$, then $\overline{D} f$ is a monogenic function.  The following result provides a characterization of such functions:
	
	\begin{prop}
		Let $f = g + i h$ be twice differentiable, with $g$ and $h$ octonion-valued. Then $D \overline{D} f = 0$ if and only if
		\begin{align*}
			\widetilde{\Delta} g - 2 \langle \partial_x, \partial_y \rangle h = 0, \\
			\widetilde{\Delta} h + 2 \langle \partial_x, \partial_y \rangle g = 0.
		\end{align*}
	\end{prop}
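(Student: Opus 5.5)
The plan is to use the preceding proposition, which gives
\[
D\overline{D} = \widetilde{\Delta} + 2i\langle \partial_x,\partial_y\rangle ,
\]
and then split into real and imaginary parts with respect to the complex unit $i$. The key structural facts are the following. The operator $\widetilde{\Delta}=\Delta_x-\Delta_y$ is a real scalar operator. The operator $\langle\partial_x,\partial_y\rangle=\sum_{j=0}^7\partial_{x_j}\partial_{y_j}$ is also a real scalar differential operator; this is how it appears at the end of the proof of the previous proposition, where it acts componentwise on $G$ and $H$. Both therefore commute with multiplication by $i$ and map octonion-valued functions to octonion-valued functions.

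Writing $f=g+ih$ with $g,h$ octonion-valued, I apply the operator term by term using linearity:
\begin{align*}
D\overline{D}f &= \widetilde{\Delta}g + i\,\widetilde{\Delta}h + 2i\langle\partial_x,\partial_y\rangle g + 2i^2\langle\partial_x,\partial_y\rangle h\\
&= \big(\widetilde{\Delta}g - 2\langle\partial_x,\partial_y\rangle h\big) + i\big(\widetilde{\Delta}h + 2\langle\partial_x,\partial_y\rangle g\big).
\end{align*}
Here I use that $i$ commutes with each $e_j$ and with real derivatives, and that $i^2=-1$. Twice differentiability is needed so that these second-order operators are defined and the previous proposition applies.

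Both brackets are $\mathbb{O}$-valued. Since $\mathbb{O}_{\mathbb{C}}=\mathbb{O}\oplus i\,\mathbb{O}$ as a real vector space, with $\RE$ and $\IM$ well defined, $D\overline{D}f=0$ holds if and only if both brackets vanish. Those two conditions are exactly the stated system, and this gives both directions of the equivalence.

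The only point needing care is to justify that $\langle\partial_x,\partial_y\rangle$ is genuinely scalar and real. I would state it explicitly as $\sum_j\partial_{x_j}\partial_{y_j}$, which follows from the final lines of the previous proof. I would also note that the factorization there was derived for octonion-valued $f$ and extends to $\mathbb{O}_{\mathbb{C}}$-valued $f$ by $\mathbb{C}$-linearity, applying it separately to $g$ and $h$. Beyond this, the argument is a routine bookkeeping step with no real obstacle.
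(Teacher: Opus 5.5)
Your proposal is correct and follows the paper's proof: the paper likewise substitutes $f=g+ih$ into $D\overline{D}=\widetilde{\Delta}+2i\langle\partial_x,\partial_y\rangle$ and reads off the two conditions. You make explicit two points the paper leaves implicit, namely the splitting $\mathbb{O}_{\mathbb{C}}=\mathbb{O}\oplus i\,\mathbb{O}$ and the fact that $\langle\partial_x,\partial_y\rangle=\sum_j\partial_{x_j}\partial_{y_j}$ is a real scalar operator.
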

	\begin{proof}
		We compute:
		\begin{align*}
			D \overline{D} f &= \widetilde{\Delta} f + 2i \langle \partial_x, \partial_y \rangle f 
			= \widetilde{\Delta} g + i \widetilde{\Delta} h + 2i \langle \partial_x, \partial_y \rangle g - 2 \langle \partial_x, \partial_y \rangle h.
		\end{align*}
	\end{proof}
	Let $\mathcal{L}:=D\overline{D}=\overline{D}D$. In the next section it will be observed that the operator $\mathcal{L}$ is ultrahyperbolic. 
	
	\begin{prop}
		If a function
		\[
		f=\sum_{j=0}^7 e_jf_j
		\]
		is monogenic, then its components belong to the kernel of the operator $\mathcal{L}$, that is, $\mathcal{L}f_j=0$ for each $j=0,1,...,7$.
	\end{prop}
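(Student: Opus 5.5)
The plan is to use the factorization $\mathcal{L}=\overline{D}D$ from the previous proposition together with the fact that $\mathcal{L}=\widetilde{\Delta}+2i\langle\partial_x,\partial_y\rangle$ is a \emph{scalar} operator, i.e. it has complex scalar coefficients and involves no octonionic units. Here the components $f_j$ are complex-valued, $f_j=g_j+ih_j$, so $f=g+ih$ with $g=\sum_j e_jg_j$ and $h=\sum_j e_jh_j$. Since $f$ is monogenic, $Df=0$, and hence
\[
\mathcal{L}f=\overline{D}(Df)=\overline{D}\,0=0 .
\]
The factorization $\mathcal{L}=\overline{D}D$ is exactly what the preceding proposition gives (it was proven there for $D\overline{D}$, and the equality $D\overline{D}=\overline{D}D$ is stated). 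If one wants to be careful about the order, the same computation with the roles of $\partial_{\overline{x}},\partial_{\overline{y}}$ and $\partial_x,\partial_y$ swapped gives $\overline{D}D f=\widetilde{\Delta}f+2i\langle\partial_x,\partial_y\rangle f$ verbatim. We implicitly assume $f$ is twice differentiable, which is the standing regularity for applying $\mathcal{L}$.

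Next we exploit that $\mathcal{L}$ acts componentwise. In real coordinates,
\[
\mathcal{L}=\sum_{k=0}^7\bigl(\partial_{x_k}^2-\partial_{y_k}^2+2i\,\partial_{x_k}\partial_{y_k}\bigr)=\sum_{k=0}^7\partial_{z_k}^2 .
\]
Its coefficients are complex scalars, and $i$ commutes with every $e_j$. Therefore
\[
\mathcal{L}f=\sum_{j=0}^7 e_j\,\mathcal{L}f_j .
\]
No associativity issue arises here, since each term is just a scalar multiple of a basis element $e_j$. Because $\{e_0,\dots,e_7\}$ is a basis of $\mathbb{O}_{\mathbb{C}}$ over $\mathbb{C}$, the identity $\mathcal{L}f=0$ forces $\mathcal{L}f_j=0$ for every $j$.

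The only point needing care is the justification of the factorization $\overline{D}D=\mathcal{L}$ in the non-associative setting, i.e. that $\overline{D}(Df)$ really equals $\sum_k\partial_{z_k}^2 f$ with no cross terms. The cross terms are
\[
\sum_{j\ne k} e_j(e_k\,\partial_{z_j}\partial_{z_k}f),
\]
and I would show they cancel using the anticommutativity of the units together with the alternativity relation
\[
e_j(e_k w)+e_k(e_j w)=-2\delta_{jk}w ,
\]
valid for $w\in\mathbb{O}_{\mathbb{C}}$. This relation follows by linearizing $x(xw)=x^2w$ for $x\in\mathbb{O}$, which is exactly the restricted alternativity stated for $\mathbb{O}_{\mathbb{C}}$. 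This is already encoded in the previous proposition, so here it suffices to cite it.
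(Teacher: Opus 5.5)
Your proof is correct and is exactly the argument the paper leaves implicit: the proposition is stated without proof right after setting $\mathcal{L}:=D\overline{D}=\overline{D}D$. So $\mathcal{L}f=\overline{D}(Df)=0$, and because $\mathcal{L}=\sum_k\partial_{z_k}^2$ is scalar it acts componentwise, which forces $\mathcal{L}f_j=0$ for every $j$. Your explicit $C^2$ assumption is worth keeping: monogenicity is only defined for $C^1$ functions and $D$ is not elliptic here (for instance $\phi(x_0+y_1)(1-ie_1)$ is monogenic for any $C^1$ function $\phi$), so without it the conclusion has to be read distributionally.
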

	This leads to the following important remark:
	\begin{rem}
		The previous result determines one key difference between octonionic and complex-octonionic monogenic functions. The component functions of octonionic monogenic functions are harmonic, i.e., they belong to the kernel of an elliptic operator, whereas the components of complex-octonionic monogenic functions belong to the kernel of a ultrahyperbolic operator $\mathcal{L}$.
	\end{rem}

	\section{An integral representation of the fundamental solution}
	Since the second-order operator found in the previous section is of general interest, we momentarily move to the general case, that is, to arbitrary space dimensions.
	Let us consider the general differential operator
	\[
	\mathcal{L}:=\Delta_x-\Delta_y+2i\langle \partial_x,\partial_y\rangle=\sum_{j=1}^n \Big( \frac{\partial^2}{\partial x_j^2}-\frac{\partial^2}{\partial y_j^2}+2i \frac{\partial}{\partial x_j}\frac{\partial}{\partial y_j}\Big),
	\]
	where $n\in\mathbb{N}$. The complex octonionic setting is particular addressed for $n=8$. The operator thus acts on twice differentiable functions 
	\( f:\mathbb{R}^n \times \mathbb{R}^n \to \mathbb{R} \), 
	which depend on two vector variables \( f = f(x,y) \).\\
	\\
	We are looking for a distribution $E$, called the \textit{fundamental solution}, which satisfies
	\[
	\mathcal{L} E(x,y)=\delta(x,y).
	\]
	We first observe that if $\partial_{z_j}=\partial_{x_j}+i\partial_{y_j}$, then $\partial_{z_j}^2=\partial_{x_j}^2-\partial_{y_j}^2+2i\partial_{x_j}\partial_{y_j}$, so the operator can also be written in the form 
	\[
	\mathcal{L}=\sum_{j=1}^n \partial_{z_j}^2.
	\]
	
	\begin{prop} When $n=1$, a fundamental solution is of the form
		\[
		E(z)=\frac{1}{4\pi} \frac{z^*}{z}.
		\]
	\end{prop}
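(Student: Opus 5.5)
The plan is to reduce the statement to the classical fact that $\frac{1}{\pi z}$ is a fundamental solution of the Cauchy--Riemann operator in the plane. For $n=1$ we have $\mathcal{L}=\partial_z^2$ with $\partial_z=\partial_x+i\partial_y$. In terms of the usual Wirtinger derivative $\partial_{z^*}:=\frac12(\partial_x+i\partial_y)$ this reads $\partial_z=2\partial_{z^*}$, hence
\[
\mathcal{L}=4\,\partial_{z^*}^2 .
\]
So it suffices to show that $\partial_{z^*}^2\big(\tfrac{z^*}{z}\big)=\pi\delta$ in the sense of distributions on $\mathbb{R}^2$. Multiplying by $\frac{4}{4\pi}$ then gives $\mathcal{L}E=\delta$.

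The argument has two steps. First, I will show that $\partial_{z^*}\big(\tfrac{z^*}{z}\big)=\tfrac{1}{z}$ holds distributionally, not only pointwise on $\mathbb{C}\setminus\{0\}$. Pointwise this is immediate, because $1/z$ is holomorphic there and $\partial_{z^*}z^*=1$. Since $z^*/z$ is bounded (of modulus $1$), it is locally integrable. To pass to distributions I pair with a test function $\varphi$ and write
\[
-\int \frac{z^*}{z}\,\partial_{z^*}\varphi \,dx\,dy = -\lim_{\varepsilon\to0}\int_{|z|>\varepsilon}\frac{z^*}{z}\,\partial_{z^*}\varphi \,dx\,dy .
\]
I then integrate by parts on $\{|z|>\varepsilon\}$ using Green's formula in the form $\int_U \partial_{z^*}F\,dx\,dy=\frac{1}{2i}\oint_{\partial U}F\,dz$. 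The resulting boundary term is bounded by $C\cdot\sup|\varphi|\cdot 2\pi\varepsilon\to0$, since the integrand has modulus at most $|\varphi|$. What remains is $\int \frac1z\varphi$, and this integral converges because $1/z$ is locally integrable in two dimensions.

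Second, I will invoke the classical identity $\partial_{z^*}\big(\tfrac{1}{\pi z}\big)=\delta$, i.e. $\partial_{z^*}(1/z)=\pi\delta$. I would prove it by the same excision argument. Away from $0$ the function $1/z$ is holomorphic, so the interior term vanishes. The boundary integral $\frac{1}{2i}\oint_{|z|=\varepsilon}\frac{\varphi}{z}\,dz$ tends to $\frac{1}{2i}\cdot 2\pi i\,\varphi(0)=\pi\varphi(0)$, and the orientation sign is fixed so that the result is $+\pi\varphi(0)$. Combining the two steps gives $\partial_{z^*}^2(z^*/z)=\pi\delta$, and therefore $\mathcal{L}\frac{1}{4\pi}\frac{z^*}{z}=\frac{4\pi}{4\pi}\delta=\delta$.

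The main obstacle is the second step, specifically keeping the signs and orientations straight in the boundary term. The boundary of $\{|z|>\varepsilon\}$ is the circle traversed clockwise, and this has to be reconciled with the minus sign in the definition of the distributional derivative. I will check the normalization against the known result $\partial_{z^*}\frac{1}{\pi z}=\delta$ as a consistency test. The first step is routine once one notes that $z^*/z$ is bounded, which makes the boundary term vanish in the limit.
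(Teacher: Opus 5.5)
Your proof is correct and follows the same route as the paper. Both reduce $\mathcal{L}=\partial_z^2$ to the Cauchy--Riemann fundamental solution via $\partial_z E=\frac{1}{2\pi z}$ and $\partial_z\big(\frac{1}{2\pi z}\big)=\delta$; your $\partial_{z^*}$ is just the paper's $\partial_z$ divided by $2$. Your excision argument also shows that the first derivative of the bounded function $z^*/z$ holds distributionally and not merely pointwise, a step the paper leaves implicit.
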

	\begin{proof}
		Now $\mathcal{L}=\partial_z^2=\partial_x^2-\partial_y^2+2i\partial_x\partial_y$. Since
		\[
		\partial_z(\frac{1}{2\pi z})=\delta(z),
		\]
		then
		\[
		\partial_zE=\frac{1}{2\pi z}
		\]
		the solution is
		\[
		E(z)=\frac{1}{4\pi} \frac{z^*}{z}
		\]
		for which $\mathcal{L}E=\delta(z)$.
	\end{proof}
	It is noted that the function found in the previous proposition is neither analytic nor anti-analytic.\\
	\\
	By applying the Fourier transform \(\mathcal{F}_{x\to\xi}\mathcal{F}_{y\to\eta}\) partially to the equation, we obtain
	\[
	(-|\xi|^2+|\eta|^2-2i\langle \xi,\eta\rangle)\widehat{E}(\xi,\eta)=1.
	\]
	The operator is ultrahyperbolic, since the symbol $P(\xi,\eta)=-|\xi|^2+|\eta|^2-2i\langle \xi,\eta\rangle=0$ when $|\xi|=|\eta|$ and $\langle \xi,\eta\rangle=0$. Formally applying the inverse transform, we obtain
	\[
	E(x, y) = \frac{1}{(2\pi)^{2n}} \int_{\mathbb{R}^{2n}} \frac{e^{i(\langle x, \xi \rangle + \langle y, \eta \rangle)}}{-|\xi|^2 + |\eta|^2 - 2i \langle \xi, \eta \rangle} \, d\xi \, d\eta.
	\]
	Since the Fourier integral contains an oscillatory exponent and a non-decaying denominator, the integral is interpreted as an oscillatory integral in the sense of distributions. This means that the integral does not converge as a usual Lebesgue integral. Typically, the integral is understood in the distributional sense as
	\[
	\left\langle E \mid \phi \right\rangle :=  \int_{\mathbb{R}^{2n}} \frac{1}{-|\xi|^2 + |\eta|^2 - 2i \langle \xi, \eta \rangle} \check{\phi}(\xi, \eta)\, d\xi \, d\eta,
	\]
	where \( \phi \in \mathcal{S}(\mathbb{R}^{2n}) \) is a Schwarz test function and $\check{\phi}$ its inverse Fourier transform. Let us write this integral using polar coordinates $\xi=r\omega$ and $\eta=\rho\nu$. Then we obtain
	\begin{align*}
		\left\langle E \mid \phi \right\rangle =  \int_{S^{n-1}} \int_{S^{n-1}} \int_0^\infty \int_0^\infty \frac{r^{n-1}\rho^{n-1}}{-r^2 + \rho^2 - 2i r \rho \langle \omega, \nu \rangle}\check{\phi}(r\omega, \rho\nu)\, drd\rho d\omega \, d\nu.
	\end{align*}
	This integral naturally splits into two parts according to the singularities of the denominator, namely
	\[
	\left\langle E \mid \phi \right\rangle=I_1(\phi)+I_2(\phi), 
	\]
	where
	\[
	I_1(\phi)=\iint_{\langle \omega, \nu \rangle\neq 0}  \int_0^\infty \int_0^\infty \frac{r^{n-1}\rho^{n-1}}{-r^2 + \rho^2 - 2i r \rho \langle \omega, \nu \rangle}\check{\phi}(r\omega, \rho\nu)\, drd\rho d\omega \, d\nu
	\]
	exists as an ordinary integral. The second part, in turn, is the principal value integral
	\[
	I_2(\phi)=\iint_{\langle \omega, \nu \rangle= 0}  P.V.\int_0^\infty \int_0^\infty \frac{r^{n-1}\rho^{n-1}}{-r^2 + \rho^2 }\check{\phi}(r\omega, \rho\nu)\, drd\rho d\omega \, d\nu,
	\]
	where we define
	\[
	P.V. \int_0^\infty \int_0^\infty  f(r,\rho) drd\rho:=\lim_{a\to\infty }\int_{\frac{1}{a}}^a \int_{\frac{1}{a} \le |\frac{\pi}{4}-\theta|\le \frac{\pi}{4}} f(R\cos(\theta),R\sin(\theta)) R d\theta dR.
	\]
	
	\begin{theorem}
		The integral
		\[
		I_2(\phi) = \iint_{\langle \omega, \nu \rangle = 0}  \mathrm{P.V.} \int_0^\infty \int_0^\infty \frac{r^{n-1}\rho^{n-1}}{-r^2 + \rho^2 }\check{\phi}(r\omega, \rho\nu)\, dr\, d\rho \, d\omega \, d\nu
		\]
		exists, where $\check{\phi} \in \mathcal{S}(\mathbb{R}^{2n})$ is a Schwartz function.
	\end{theorem}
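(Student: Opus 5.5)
The plan is to reduce the existence of $I_2(\phi)$ to a one-dimensional principal value estimate in the angular variable $\theta$, uniform in the directions $(\omega,\nu)$. Fix $\omega,\nu\in S^{n-1}$ with $\langle\omega,\nu\rangle=0$ and set $g(r,\rho):=\check{\phi}(r\omega,\rho\nu)$. Passing to polar coordinates $r=R\cos\theta$, $\rho=R\sin\theta$ with $\theta\in(0,\pi/2)$, the denominator becomes $-R^2\cos 2\theta$. The inner integrand, including the Jacobian $R$, is therefore
\[
-\frac{R^{2n-1}(\cos\theta\sin\theta)^{n-1}}{\cos 2\theta}\, g(R\cos\theta,R\sin\theta).
\]
The only singularity is at $\theta=\pi/4$, the light cone $r=\rho$. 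The origin and infinity are harmless: the factor $R^{2n-1}$ is integrable at $R=0$, and the Schwartz decay of $g$ handles $R\to\infty$.

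The key step is the principal value in $\theta$. Write $\theta=\pi/4+t$, so that $\cos 2\theta=-\sin 2t$ is odd in $t$ with a simple zero at $t=0$. Set
\[
h(R,t):=R^{2n-1}(\cos\theta\sin\theta)^{n-1}g(R\cos\theta,R\sin\theta).
\]
Then the truncated integral over $1/a\le|t|\le\pi/4$ equals
\[
\int_{1/a\le |t|\le \pi/4}\frac{h(R,t)}{\sin 2t}\,dt=\int_{1/a}^{\pi/4}\frac{h(R,t)-h(R,-t)}{\sin 2t}\,dt .
\]
By the mean value theorem, $|h(R,t)-h(R,-t)|\le 2|t|\sup_s|\partial_t h(R,s)|$. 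Since $|t|/|\sin 2t|$ is bounded on $[0,\pi/4]$, the integrand is bounded and the limit $a\to\infty$ exists in $\theta$. For the $R$-bound, use the chain rule: $\partial_t h$ is bounded by $C R^{2n-1}(|g|+R|\nabla g|)$ evaluated on the circle of radius $R$. Since $\check\phi\in\mathcal S$, we have $\sup_{|(r,\rho)|=R}(|g|+R|\nabla g|)\le C_N(1+R)^{-N}$, with a constant $C_N$ depending only on Schwartz seminorms of $\check\phi$ and not on $(\omega,\nu)$.

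This gives a majorant $C(1+R)^{-N}R^{2n-1}$ for the symmetrized integrand, which is integrable on $(0,\infty)$. Dominated convergence then lets me take the joint limit $a\to\infty$ (both the $R$-cutoff $[1/a,a]$ and the $\theta$-cutoff) and shows that the inner P.V. integral defines a function $J(\omega,\nu)$ with $|J|\le C\,p(\check\phi)$ for some Schwartz seminorm $p$. Continuity of $J$ follows from the same domination. Finally, the set $\{\langle\omega,\nu\rangle=0\}\subset S^{n-1}\times S^{n-1}$ is a compact smooth submanifold of codimension one, the unit sphere bundle of $TS^{n-1}$, with finite surface measure. So the outer integral of the bounded function $J$ exists, which proves the theorem and moreover shows that $I_2$ is continuous in $\phi$.

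The main obstacle is the uniform control of the symmetrized difference near $t=0$ for both small and large $R$. In particular, the $R$-factor produced by differentiating in $t$ must be absorbed by Schwartz decay, and the $(\omega,\nu)$-dependence must not enter the constants. I would handle this as above, with the derivative estimate via $|\nabla\check\phi|$, which gives constants independent of direction because $|r\omega|=r$ and $|\rho\nu|=\rho$. The small-$R$ endpoint needs the observation that $R^{2n-1}$ suffices even for $n=1$. If needed, I would also note that the limit in the P.V. definition couples the $R$- and $\theta$-truncations, and justify interchanging the limits by dominated convergence using the majorant above.
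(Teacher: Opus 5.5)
Your proof follows the paper's route: polar coordinates in $(r,\rho)$, the shift $\theta=\pi/4+t$ that turns the light-cone singularity into $1/\sin 2t$, cancellation of the odd singular part, Schwartz decay in $R$, and continuity of $J$ on the compact set $\{\langle\omega,\nu\rangle=0\}$. Your symmetrization with a mean-value bound, using direction-independent Schwartz seminorms and dominated convergence for the coupled $R$- and $\theta$-truncations, is a more rigorous execution of the paper's pointwise Taylor expansion, which never checks uniformity in $R$ or the interchange of limits.

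One correction: the radial weight is $R^{2n-2}\cdot R/R^{2}=R^{2n-3}$, not $R^{2n-1}$, because you dropped the $R^{2}$ from the denominator. Your majorant is therefore $C(1+R)^{-N}R^{2n-3}$, which is still integrable at $R=0$ for $n\ge 2$. Your remark about $n=1$ rests on the wrong exponent. That case is vacuous anyway, since no $\omega,\nu\in S^{0}=\{\pm 1\}$ satisfy $\langle\omega,\nu\rangle=0$; moreover, the $|g|$-term in $\partial_t h$ carries the factor $n-1$.
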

	\begin{proof}
		Fix $\omega, \nu \in S^{n-1}$ such that $\langle \omega, \nu \rangle = 0$. Consider the integral
		\[
		J(\omega, \nu): = \mathrm{P.V.} \int_0^\infty \int_0^\infty \frac{r^{n-1}\rho^{n-1}}{-r^2 + \rho^2 }\check{\phi}(r\omega, \rho\nu)\, dr\, d\rho.
		\]
		Change to polar coordinates by setting
		\[
		r = R \cos \theta, \quad \rho = R \sin \theta, \quad R > 0, \quad \theta \in (0, \pi/2).
		\]
		Then
		\[
		dr\, d\rho = R\, dR\, d\theta, \quad r^{n-1}\rho^{n-1} = R^{2n-2} (\cos \theta \sin \theta)^{n-1}, \quad -r^2 + \rho^2 = R^2(\sin^2 \theta - \cos^2 \theta).
		\]
		The integral becomes
		\[
		J(\omega, \nu) = \lim_{a \to \infty} \int_{1/a}^{a} \int_{\substack{\theta \in (0, \pi/2) \\ |\pi/4 - \theta| \geq 1/a}} R^{2n-3} \frac{(\cos \theta \sin \theta)^{n-1}}{\sin^2 \theta - \cos^2 \theta} \check{\phi}(R \cos \theta \, \omega, R \sin \theta\, \nu)  \, d\theta \, dR.
		\]
		The singularity occurs when $\sin^2 \theta - \cos^2 \theta = 0$, i.e., $\theta = \pi/4$. Perform the change of variables $u = \theta - \pi/4$, so that
		\[
		\sin^2 \theta - \cos^2 \theta = \sin(2u), \quad \cos \theta \sin \theta = \frac{1}{2} \cos(2u).
		\]
		We obtain
		\[
		J(\omega, \nu) = \lim_{a \to \infty} \int_{1/a}^{a} \int_{|u| \geq 1/a} R^{2n-3} \left(\frac{1}{2}\right)^{n-1} \frac{(\cos(2u))^{n-1}}{\sin(2u)} \check{\phi}(R \cos(\pi/4+u)\, \omega, R \sin(\pi/4+u)\, \nu)  \, du \, dR.
		\]
		Set $t = 2u$, so that $du = dt/2$, and when $u \in [-\pi/4, \pi/4]$, then $t \in [-\pi/2, \pi/2]$. Then
		\[
		J(\omega, \nu) = \int_0^\infty R^{2n-3} \left(\frac{1}{2}\right)^{n} \left[ \mathrm{P.V.} \int_{-\pi/2}^{\pi/2} \frac{(\cos t)^{n-1}}{\sin t} \check{\phi}(R \cos(\pi/4 + t/2)\, \omega, R \sin(\pi/4 + t/2)\, \nu)  \, dt \right] \, dR.
		\]
		Define the function
		\[
		F(R, t) = \check{\phi}(R \cos(\pi/4 + t/2)\,  \omega, R \sin(\pi/4 + t/2)\, \nu).
		\]
		Since $\check{\phi}$ is a Schwartz function, $F$ is smooth and rapidly decreasing in $R$. In particular, for fixed $R$, $F(R, t)$ is smooth in $t$. Expand $F(R, t)$ in a Taylor series around $t = 0$:
		\[
		F(R, t) = F(R, 0) + F_t(R, 0) t + O(t^2).
		\]
		Similarly, $(\cos t)^{n-1} = 1 + O(t^2)$. Thus the integrand is
		\[
		\frac{(\cos t)^{n-1}}{\sin t} F(R, t) = \frac{1}{\sin t} \left( F(R, 0) + F_t(R, 0) t + O(t^2) \right).
		\]
		Since $\sin t \sim t$ near $t = 0$, we have
		\[
		\frac{(\cos t)^{n-1}}{\sin t} F(R, t) = \frac{F(R, 0)}{t} + F_t(R, 0) + O(t).
		\]
		The term
		\[
		\dfrac{F(R, 0)}{t}
		\]
		is an odd function, so its principal value over a symmetric interval is zero. Indeed, the principal value integral over $[-\pi/2, \pi/2]$ is defined by
		\[
		\mathrm{P.V.} \int_{-\pi/2}^{\pi/2} \frac{F(R, 0)}{t}  \, dt 
		= \lim_{\epsilon \to 0^+} \left( \int_{-\pi/2}^{-\epsilon} \frac{F(R, 0)}{t}  \, dt 
		+ \int_{\epsilon}^{\pi/2} \frac{F(R, 0)}{t}  \, dt \right).
		\]
		Computing the integrals:
		\[
		\int_{-\pi/2}^{-\epsilon} \frac{F(R, 0)}{t}  \, dt 
		= F(R, 0) \int_{-\pi/2}^{-\epsilon} \frac{1}{t}  \, dt 
		= F(R, 0) \left[ \ln |t| \right]_{-\pi/2}^{-\epsilon} 
		= F(R, 0) (\ln \epsilon - \ln \pi/2),
		\]
		and
		\[
		\int_{\epsilon}^{\pi/2} \frac{F(R, 0)}{t}  \, dt 
		= F(R, 0) \int_{\epsilon}^{\pi/2} \frac{1}{t}  \, dt 
		= F(R, 0) \left[ \ln |t| \right]_{\epsilon}^{\pi/2} 
		= F(R, 0) (\ln \pi/2 - \ln \epsilon).
		\]
		Combining these:
		\[
		\mathrm{P.V.} \int_{-\pi/2}^{\pi/2} \frac{F(R, 0)}{t}  \, dt 
		= \lim_{\epsilon \to 0^+} F(R, 0) \Big[ (\ln \epsilon - \ln \pi/2) + (\ln \pi/2 - \ln \epsilon) \Big] 
		= \lim_{\epsilon \to 0^+} F(R, 0) \cdot 0 = 0.
		\]
		The remaining terms are integrable, so
		\[
		\mathrm{P.V.} \int_{-\pi/2}^{\pi/2} \frac{(\cos t)^{n-1}}{\sin t} F(R, t)  \, dt
		\]
		exists for every fixed $R > 0$. Since $\check{\phi}$ is rapidly decreasing, this integral is rapidly decreasing in $R$, and thus the integral in $R$ converges. Hence $J(\omega, \nu)$ is well-defined. Now
		\[
		I_2(\phi) = \iint_{\langle \omega, \nu \rangle = 0} J(\omega, \nu)  \, d\omega \, d\nu.
		\]
		The set $\{ (\omega, \nu) \in S^{n-1} \times S^{n-1} : \langle \omega, \nu \rangle = 0 \}$ is compact. Since $\check{\phi}$ is smooth, $J(\omega, \nu)$ is continuous on this set. Therefore, the integral $I_2$ exists.
	\end{proof}
	
	\begin{rem}
		The notation $\iint_{\langle \omega, \nu \rangle = 0}$ does not mean an integral over a zero-measure set in the usual sense; rather, it is a shorthand emphasizing that the singularity occurs only when $\langle \omega, \nu \rangle = 0$. In practice, the integral is computed over the entire space $S^{n-1} \times S^{n-1}$, but the principal value handles the singularity at $\langle \omega, \nu \rangle = 0$. As a distribution, $I_2$ is defined as the limit
		\[
		I_2(\phi) = \lim_{\epsilon \to 0} \iint_{|\langle \omega, \nu \rangle| \geq \epsilon} 
		\left[ \mathrm{P.V.} \int_0^\infty \int_0^\infty 
		\frac{r^{n-1}\rho^{n-1}}{-r^2 + \rho^2}\,
		\check{\phi}(r\omega, \rho\nu)\, dr\, d\rho \right] d\omega\, d\nu,
		\]
		where the principal value inside the integral is already defined. The existence of this limit is ensured in the same way as in the proof.
	\end{rem}

	\section{On polynomial solutions}
	Consider the operator 
	\[
	\mathcal{L}=\sum_{j=1}^n \partial_{z_j}^2
	\]
	and its polynomial solutions. In this section, we use an overline to denote complex conjugation so that the notation does not become too cumbersome, in other words, if \( w = u + iv \), then \( \overline{w} = u - iv \).. There is no risk of confusion with octonion conjugation. Note that complex $k$-homogeneous polynomials are obtained from real polynomials
	\[
	\mathcal{P}_k(\mathbb{R}^{2n}) \otimes \mathbb{C} \cong \mathcal{P}_k(\mathbb{C}^n),
	\]
	where \((x_j, y_j) \mapsto (z_j, \overline{z}_j)\) is a linear bijection \(\mathbb{R}^{2n} \to \mathbb{C}^n\) with determinant \( (-2i)^n \neq 0\). Thus the monomials 
	\[
	\{ z^\alpha \overline{z}^\beta : |\alpha| + |\beta| = k \}
	\]
	form a basis for the space $\mathcal{P}_k(\mathbb{C}^n)$. Here $\alpha,\beta\in \mathbb{N}_0^n$ are multi-indices, i.e.\ for example $\alpha=(\alpha_1,...,\alpha_n)$, $z^\alpha:=z_1^{\alpha_1}\cdots z_n^{\alpha_n}$ and $|\alpha|=\alpha_1+\cdots+\alpha_n$. Through the previous isomorphism we obtain (see e.g.\ Proposition 5.8 in \cite{ABR})
	\[
	\dim \mathcal{P}_k(\mathbb{C}^n) = \binom{2n + k - 1}{2n-1}.
	\]
	\begin{prop}
		Let 
		\[
		P_k(z) = \sum_{\substack{\alpha, \beta \in \mathbb{N}_0^n \\ |\alpha| + |\beta| = k}} a_{\alpha,\beta}  z^\alpha \overline{z}^\beta
		\] 
		be a complex $k$-homogeneous polynomial. Then  
		\[
		\mathcal{L} P_k = 0 \quad \iff \quad \sum_{j=1}^n (\gamma_j + 2)(\gamma_j + 1)  a_{\alpha, \gamma + 2\varepsilon_j} = 0 \quad \forall \alpha, \gamma \text{ with } |\alpha| + |\gamma| = k-2,
		\]  
		where 
		\[
		\varepsilon_j=(0,...,0,\underbrace{1}_{j },0,...0)
		\]
		is the unit multi-index.
	\end{prop}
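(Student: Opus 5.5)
Since $\partial_{z_j}=\partial_{x_j}+i\partial_{y_j}$ is twice the Wirtinger derivative $\partial/\partial\overline{z}_j$, we have $\partial_{z_j} z_k=(\partial_{x_j}+i\partial_{y_j})(x_k+iy_k)=\delta_{jk}(1-1)=0$. Also $\partial_{z_j}\overline{z}_k=\delta_{jk}(1+1)=2\delta_{jk}$. So $\mathcal{L}$ treats the $z$-variables as constants and acts on $\overline{z}^\beta$ like $4\sum_j \partial^2/\partial\overline{z}_j^2$. We will (i) compute $\mathcal{L}$ on a monomial $z^\alpha\overline{z}^\beta$, (ii) reindex the resulting sum, and (iii) use linear independence of the monomial basis of $\mathcal{P}_{k-2}(\mathbb{C}^n)$ recalled just before the statement.

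For step (i), the product rule and $\partial_{z_j}z^\alpha=0$ give
\[
\partial_{z_j}^2\big(z^\alpha\overline{z}^\beta\big)=z^\alpha\,\partial_{z_j}^2\overline{z}^\beta=4\beta_j(\beta_j-1)\,z^\alpha\overline{z}^{\beta-2\varepsilon_j}.
\]
This term is zero when $\beta_j\le 1$. Hence
\[
\mathcal{L}P_k=4\sum_{j=1}^n\sum_{\substack{|\alpha|+|\beta|=k\\ \beta_j\ge 2}}\beta_j(\beta_j-1)\,a_{\alpha,\beta}\,z^\alpha\overline{z}^{\beta-2\varepsilon_j}.
\]

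For step (ii), in the $j$-th summand substitute $\gamma=\beta-2\varepsilon_j$. The map $\beta\mapsto\gamma$ is a bijection from $\{\beta:\beta_j\ge2,\ |\alpha|+|\beta|=k\}$ onto $\{\gamma\in\mathbb{N}_0^n:|\alpha|+|\gamma|=k-2\}$, and $\beta_j(\beta_j-1)=(\gamma_j+2)(\gamma_j+1)$. Collecting terms gives
\[
\mathcal{L}P_k=4\sum_{|\alpha|+|\gamma|=k-2}\Big(\sum_{j=1}^n(\gamma_j+2)(\gamma_j+1)\,a_{\alpha,\gamma+2\varepsilon_j}\Big)z^\alpha\overline{z}^\gamma .
\]
Here the inner coefficient is the same for every $j$ once $\alpha,\gamma$ are fixed, so the regrouping is legitimate.

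For step (iii), the monomials $z^\alpha\overline{z}^\gamma$ with $|\alpha|+|\gamma|=k-2$ form a basis of $\mathcal{P}_{k-2}(\mathbb{C}^n)$, since $(x,y)\mapsto(z,\overline{z})$ is a linear bijection. So $\mathcal{L}P_k=0$ holds exactly when every bracketed coefficient vanishes, which is the stated condition. For $k<2$ both sides are vacuous: $\mathcal{L}P_k=0$ trivially and there are no index pairs.

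The only delicate point is the reindexing in step (ii). One must check that summands with $\beta_j\in\{0,1\}$ drop out, and that each target pair $(\alpha,\gamma)$ is reached exactly once for each $j$. The rest is routine.
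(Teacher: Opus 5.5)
Your proof is correct and follows the paper's argument essentially line by line. You compute $\partial_{z_j}^2(z^\alpha\overline{z}^\beta)=4\beta_j(\beta_j-1)z^\alpha\overline{z}^{\beta-2\varepsilon_j}$, reindex via $\gamma=\beta-2\varepsilon_j$, and read off coefficients using the monomial basis of $\mathcal{P}_{k-2}(\mathbb{C}^n)$. One small wording fix: the swap of the $j$-sum and the $(\alpha,\gamma)$-sum is justified because every reindexed $j$-summand runs over the same index set $|\alpha|+|\gamma|=k-2$, not because the inner coefficient is the same for every $j$ (it is not).
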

	
	\begin{proof}
		Since $\partial_{z_j}z_j=0$ and $\partial_{z_j}\overline{z}_j=2$, we obtain
		\[
		\partial_{z_j} (z^\alpha \overline{z}^\beta) = 2\beta_j  z^\alpha \overline{z}^{\beta - \varepsilon_j}, \quad \text{where } \beta_j \geq 1,
		\]  
		and correspondingly the second derivative
		\[
		\partial_{z_j}^2 (z^\alpha \overline{z}^\beta) = 4\beta_j (\beta_j - 1)  z^\alpha \overline{z}^{\beta - 2\varepsilon_j}, \quad \text{where } \beta_j \geq 2.
		\]  
		Using the operator $\mathcal{L} =  \sum_{j=1}^n \partial_{z_j}^2$ we get:  
		\[
		\mathcal{L} (z^\alpha \overline{z}^\beta) = 4 \sum_{j=1}^n \beta_j (\beta_j - 1)  z^\alpha \overline{z}^{\beta - 2\varepsilon_j}.
		\]  
		Thus
		\[
		\mathcal{L} P_k(z) = 4 \sum_{\substack{\alpha, \beta \\ |\alpha| + |\beta| = k}} a_{\alpha,\beta} \sum_{j=1}^n \beta_j (\beta_j - 1)  z^\alpha \overline{z}^{\beta - 2\varepsilon_j}.
		\]  
		Make the change of variable $\gamma = \beta - 2\varepsilon_j$, so that $\beta = \gamma + 2\varepsilon_j$ and $\beta_j = \gamma_j + 2$. Then  
		\[
		\mathcal{L} P_k(z) = 4 \sum_{\substack{\alpha, \gamma \\ |\alpha| + |\gamma| = k-2}} \left( \sum_{j=1}^n (\gamma_j + 2)(\gamma_j + 1)  a_{\alpha, \gamma + 2\varepsilon_j} \right) z^\alpha \overline{z}^\gamma.
		\]  
		The polynomial $\mathcal{L} P_k$ is identically zero if and only if all of its coefficients vanish. Hence  
		\[
		\mathcal{L} P_k(z) = 0 \quad \iff \quad \sum_{j=1}^n (\gamma_j + 2)(\gamma_j + 1)  a_{\alpha, \gamma + 2\varepsilon_j} = 0 \quad \forall \alpha, \gamma \text{ with } |\alpha| + |\gamma| = k-2.
		\]  
	\end{proof}
	
	\begin{thm}\label{POLYNMI}
		Let 
		\[
		\mathcal{H}_k^{\mathcal{L}}(\mathbb{C}^n) = \{ P_k \in \mathcal{P}_k(\mathbb{C}^n) : \mathcal{L} P_k = 0 \}
		\]
		be the space of $k$-homogeneous polynomials belonging to the kernel of the operator $\mathcal{L}$. Then
		\[
		\dim \mathcal{H}_k^{\mathcal{L}}(\mathbb{C}^n) = \binom{2n+k-1}{2n-1} - \binom{2n+k-3}{2n-1}, \quad k \ge 2,
		\]
		and \(\dim \mathcal{H}_0^{\mathcal{L}}(\mathbb{C}^n) = 1, \ \dim \mathcal{H}_1^{\mathcal{L}}(\mathbb{C}^n) = 2n\).
	\end{thm}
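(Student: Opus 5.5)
The plan is to show that the linear map $\mathcal{L}\colon \mathcal{P}_k(\mathbb{C}^n)\to\mathcal{P}_{k-2}(\mathbb{C}^n)$ is surjective for $k\ge 2$. Rank–nullity then gives
\[
\dim \mathcal{H}_k^{\mathcal{L}}(\mathbb{C}^n)=\dim\mathcal{P}_k(\mathbb{C}^n)-\dim\mathcal{P}_{k-2}(\mathbb{C}^n)=\binom{2n+k-1}{2n-1}-\binom{2n+k-3}{2n-1},
\]
using the dimension formula quoted before the previous proposition. The cases $k=0,1$ are immediate: $\mathcal{L}$ is second order, so it annihilates all of $\mathcal{P}_0$ and $\mathcal{P}_1$, whose dimensions are $1$ and $2n$.

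I will work in the basis $z^\alpha\overline{z}^\beta$. By the computation in the proof of the previous proposition, $\mathcal{L}$ acts only on the $\overline{z}$-variables and leaves $z^\alpha$ untouched:
\[
\mathcal{L}(z^\alpha\overline{z}^\beta)=z^\alpha\cdot 4\sum_{j}\partial_{w_j}^2(w^\beta)\big|_{w=\overline{z}},
\]
where $w$ is a formal variable in $\mathbb{C}^n$. More precisely, $\mathcal{L}$ corresponds to $4\Delta_w$ acting on holomorphic polynomials in $w=(w_1,\dots,w_n)$, with $z^\alpha$ treated as a coefficient. The space therefore splits as a direct sum
\[
\mathcal{P}_k(\mathbb{C}^n)=\bigoplus_{m=0}^{k}\mathcal{P}_{k-m}[z]\otimes\mathcal{P}_m[w],
\]
and $\mathcal{L}=4\,\mathrm{id}\otimes\Delta_w$ maps the $m$-th summand into $\mathcal{P}_{k-m}[z]\otimes\mathcal{P}_{m-2}[w]$. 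It thus suffices to show that $\Delta_w\colon\mathcal{P}_m[w]\to\mathcal{P}_{m-2}[w]$ is surjective for every $m\ge 2$. Every target summand $\mathcal{P}_{k-m}[z]\otimes\mathcal{P}_{m-2}[w]$ with $2\le m\le k$ is then hit, and together these summands exhaust $\mathcal{P}_{k-2}(\mathbb{C}^n)$, since $\mathcal{P}_{k-2}=\bigoplus_{m'=0}^{k-2}\mathcal{P}_{k-2-m'}[z]\otimes\mathcal{P}_{m'}[w]$ with $m'=m-2$.

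The only genuine point is surjectivity of the complex Laplacian on holomorphic homogeneous polynomials. I will prove it by the standard Fischer-inner-product argument, set up for the formal variables $w$. Define $\langle p,q\rangle=p(\partial_w)\overline{q}$, suitably interpreted with conjugated coefficients, so that it is positive definite with monomials orthogonal. Multiplication by $|w|^2_{\mathbb{C}}=\sum_j w_j^2$ is then adjoint to $\Delta_w$. Here $\sum_j w_j^2$ has real coefficients, so no conjugation issue arises. Since multiplication by $\sum_j w_j^2$ is injective ($\mathbb{C}[w]$ is an integral domain), its adjoint $\Delta_w$ is surjective.

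Alternatively, I could avoid inner products and argue by induction. Order monomials lexicographically, with $w_1$ distinguished. Then $\Delta_w(w_1^{\gamma_1+2}w'^{\gamma'})$ equals $(\gamma_1+2)(\gamma_1+1)\,w_1^{\gamma_1}w'^{\gamma'}$ plus terms with a strictly higher power of $w_1$. This is a triangular system, and it can be solved downward from the top power of $w_1$. The coefficient condition in the previous proposition is exactly this triangular structure, so I could equally phrase the argument as solving that system for the coefficients $a_{\alpha,\gamma+2\varepsilon_1}$.

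I expect the main obstacle to be bookkeeping rather than substance: making sure the tensor splitting is compatible with the isomorphism $\mathcal{P}_k(\mathbb{R}^{2n})\otimes\mathbb{C}\cong\mathcal{P}_k(\mathbb{C}^n)$, so that the monomials $z^\alpha\overline{z}^\beta$ really are a basis and $\partial_{z_j}$ really kills $z_j$ and acts on $\overline{z}_j$ as $2\partial_{w_j}$. The proof of the previous proposition already supplies both facts. I will take the triangular induction as the primary argument, since it is fully elementary.
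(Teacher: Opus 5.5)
Your proposal is correct and is essentially the paper's argument in dual form. The paper fixes $\alpha$ and shows that the constraint equations (the coefficients of $z^\alpha\overline{z}^\gamma$ in $\mathcal{L}P_k$) are linearly independent by finding in each equation a variable absent from all earlier ones; this is your surjectivity of $\Delta_w$ in the $\overline{z}$-variables, with the new variable being your leading coefficient $a_{\alpha,\gamma+2\varepsilon_1}$. Your explicit choice of $w_1$ as the distinguished variable is what actually makes that step rigorous: the paper's lemma that two equations share at most one variable does not by itself produce a fresh variable, since an equation has only $n$ variables but may meet many earlier equations. Your Fischer-inner-product alternative is a genuinely different route that gives the same surjectivity without any bookkeeping.
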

	\begin{proof}
		The space $\mathcal{P}_k(\mathbb{C}^n)$ consists of monomials $z^\alpha \overline{z}^\beta$ with $|\alpha| + |\beta| = k$, and its dimension is
		\[
		\dim \mathcal{P}_k(\mathbb{C}^n) = \binom{2n + k - 1}{2n-1}.
		\]
		Polynomials in the kernel of $\mathcal{L} = \sum_{j=1}^n \partial_{z_j}^2$ correspond to coefficients $a_{\alpha, \beta}$ satisfying the linear system
		\begin{align}\label{UHT}
			\sum_{j=1}^n (\gamma_j+2)(\gamma_j+1) a_{\alpha, \gamma + 2\varepsilon_j} = 0, \quad |\alpha| + |\gamma| = k-2.
		\end{align}
		Fix $\alpha$ and assume that $\gamma$ runs through all possible variations $\gamma^{(1)},...,\gamma^{(m)}$ such that $|\alpha|+|\gamma^{(j)}|=k-2$, for $j=1,...,m$. Assume further that the multi-indices are ordered lexicographically, i.e.\ $\gamma^{(1)}<\cdots<\gamma^{(m)}$. Then we obtain the equations
		\begin{align*}
			(\gamma_1^{(1)}+2)(\gamma_1^{(1)}+1) a_{\alpha, \gamma^{(1)} + 2\varepsilon_1}+\cdots+ (\gamma_n^{(1)}+2)(\gamma_n^{(1)}+1) a_{\alpha, \gamma^{(1)} + 2\varepsilon_n} = 0,\\
			(\gamma_1^{(2)}+2)(\gamma_1^{(2)}+1) a_{\alpha, \gamma^{(2)} + 2\varepsilon_1}+\cdots+ (\gamma_n^{(2)}+2)(\gamma_n^{(2)}+1) a_{\alpha, \gamma^{(2)} + 2\varepsilon_n} = 0,\\ 
			\cdots\\
			(\gamma_1^{(m)}+2)(\gamma_1^{(m)}+1) a_{\alpha, \gamma^{(m)} + 2\varepsilon_1}+\cdots+ (\gamma_n^{(m)}+2)(\gamma_n^{(m)}+1) a_{\alpha, \gamma^{(m)} + 2\varepsilon_n} = 0.\\
		\end{align*}
		The variables $a_{\alpha,\beta}$ satisfy $|\beta|=k-|\alpha|$, i.e.\ for each fixed $\alpha$ there are
		\[
		v=\dim  \mathcal{P}_{k-|\alpha|}(\mathbb{C}^n) = \binom{2n + k-|\alpha| - 1}{2n-1}
		\]
		variables. On the other hand, the equations for each fixed $\alpha$ correspond to those $\gamma$ with $|\gamma|=k-2-|\alpha|$. Thus
		\[
		m=\dim  \mathcal{P}_{k-2-|\alpha|}(\mathbb{C}^n) = \binom{2n + k-|\alpha| - 3}{2n-1}.
		\]
		Hence $m<v$, i.e.\ there are always fewer equations than variables.
	\end{proof}
	
	In the proof we need the following technical lemma.
	
	\begin{lem}
		Let
		\[
		B_j=\{ \beta=\gamma^{(j)}+2\varepsilon_\ell: \ell=1,...,n\}
		\]
		be the coefficients corresponding to equation $j$, for $j=1,...,m$. Then the index sets have at most one common element.
	\end{lem}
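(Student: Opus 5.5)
The plan is a direct combinatorial argument on the multi-indices. I read the statement as follows: for distinct indices $i\neq j$ in $\{1,\dots,m\}$, the sets $B_i$ and $B_j$ satisfy $|B_i\cap B_j|\le 1$. Here $\gamma^{(i)}\neq\gamma^{(j)}$, because the $\gamma^{(j)}$ are the distinct, lexicographically ordered multi-indices with $|\gamma^{(j)}|=k-2-|\alpha|$.

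\textbf{Step 1 (elements of one set are distinct).} Within a single $B_j$ the $n$ elements $\gamma^{(j)}+2\varepsilon_\ell$, $\ell=1,\dots,n$, are pairwise distinct. This is immediate, since $2\varepsilon_\ell\neq 2\varepsilon_{\ell'}$ for $\ell\neq\ell'$.

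\textbf{Step 2 (a common element forces a fixed difference).} Suppose $\beta\in B_i\cap B_j$. Then
\[
\beta=\gamma^{(i)}+2\varepsilon_\ell=\gamma^{(j)}+2\varepsilon_p
\]
for some $\ell,p$. This gives
\[
\gamma^{(i)}-\gamma^{(j)}=2(\varepsilon_p-\varepsilon_\ell).
\]
If $\ell=p$ we would get $\gamma^{(i)}=\gamma^{(j)}$, which is a contradiction. Hence $\ell\neq p$. The integer vector $\varepsilon_p-\varepsilon_\ell$ then has entry $+1$ at position $p$, entry $-1$ at position $\ell$, and $0$ elsewhere.

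\textbf{Step 3 (uniqueness of the common element).} Suppose $\beta'=\gamma^{(i)}+2\varepsilon_{\ell'}=\gamma^{(j)}+2\varepsilon_{p'}$ is a second common element. Applying Step 2 to $\beta'$ gives
\[
2(\varepsilon_{p'}-\varepsilon_{\ell'})=\gamma^{(i)}-\gamma^{(j)}=2(\varepsilon_p-\varepsilon_\ell),
\]
with $\ell'\neq p'$. Reading off the positions of the $+1$ and $-1$ entries in this vector identity forces $p'=p$ and $\ell'=\ell$. Therefore $\beta'=\gamma^{(i)}+2\varepsilon_\ell=\beta$. So $B_i\cap B_j$ contains at most one element, and the lemma follows.

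The only point that needs care is the unique recovery of the pair $(p,\ell)$ from $\varepsilon_p-\varepsilon_\ell$ in Step 3. That recovery works precisely because $p\neq\ell$, which Step 2 guarantees from $\gamma^{(i)}\neq\gamma^{(j)}$. Everything else is routine bookkeeping with multi-indices. I do not expect a serious obstacle, provided the statement is read as concerning distinct equations $i\neq j$.
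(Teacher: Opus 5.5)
Your proof is correct and follows essentially the same route as the paper. In both, a common element forces $\gamma^{(i)}-\gamma^{(j)}=2(\varepsilon_p-\varepsilon_\ell)$ with $p\neq\ell$, and a second common element must then reproduce the same pair; where the paper runs a case analysis on $\varepsilon_{k_1}-\varepsilon_{\ell_1}=\varepsilon_{k_2}-\varepsilon_{\ell_2}$, you read off the positions of the $+1$ and $-1$ entries, which is cleaner and covers every case at once.
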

	\begin{proof}
		Assume that $\gamma^{(i)}<\gamma^{(j)}$ and suppose that
		\[
		\gamma^{(i)}+2\varepsilon_k=\gamma^{(j)}+2\varepsilon_\ell
		\]
		for some indices $k$ and $\ell$. If $k=\ell$, then $\gamma^{(i)}=\gamma^{(j)}$, a contradiction. It is possible to have one common element
		\[
		\gamma^{(i)}=\gamma^{(j)}+2\varepsilon_\ell-2\varepsilon_k
		\]
		when $k\neq \ell$. Assume that the sets
		\[
		B_i=\{\gamma^{(i)}+2\varepsilon_\ell\mid \ell=1,\dots,n\}
		\quad\text{and}\quad
		B_j=\{\gamma^{(j)}+2\varepsilon_k\mid k=1,\dots,n\}
		\]
		have two distinct common elements $\beta^{(1)}\neq\beta^{(2)}$. Then there exist indices
		$\ell_1,\ell_2,k_1,k_2$ such that
		\[
		\beta^{(1)}=\gamma^{(i)}+2\varepsilon_{\ell_1}=\gamma^{(j)}+2\varepsilon_{k_1},\qquad
		\beta^{(2)}=\gamma^{(i)}+2\varepsilon_{\ell_2}=\gamma^{(j)}+2\varepsilon_{k_2}.
		\]
		This implies
		\[
		\gamma^{(i)}-\gamma^{(j)}=2\varepsilon_{k_1}-2\varepsilon_{\ell_1},
		\qquad
		\gamma^{(i)}-\gamma^{(j)}=2\varepsilon_{k_2}-2\varepsilon_{\ell_2},
		\]
		and combining,
		\begin{align}\label{TAHTI}
			\varepsilon_{k_1}-\varepsilon_{\ell_1}=\varepsilon_{k_2}-\varepsilon_{\ell_2}. 
		\end{align}
		Equation (\ref{TAHTI}) is possible only in the following cases:
		\begin{enumerate} 
			\item $\ell_1=\ell_2$ and $k_1=k_2$. Then $\beta^{(1)}=\beta^{(2)}$, contradicting the assumption $\beta^{(1)}\neq\beta^{(2)}$.
			\item $\ell_1=k_2$ and $\ell_2=k_1$ (with $\ell_1\neq\ell_2$). Then equation (\ref{TAHTI}) simplifies to
			\[
			\varepsilon_{k_1}-\varepsilon_{\ell_1}=\varepsilon_{\ell_1}-\varepsilon_{k_1}
			\;\;\Longrightarrow\;\; 2(\varepsilon_{k_1}-\varepsilon_{\ell_1})=0,
			\]
			which is impossible since $k_1\neq \ell_1$ implies $\varepsilon_{k_1}\neq \varepsilon_{\ell_1}$.
			\item $\ell_1=k_1$ and $\ell_2=k_2$ (with $\ell_1\neq\ell_2$). Then from the original equations follows
			\[
			\gamma^{(i)}+2\varepsilon_{\ell_1}=\gamma^{(j)}+2\varepsilon_{\ell_1}
			\;\;\Longrightarrow\;\; \gamma^{(i)}=\gamma^{(j)},
			\]
			contradicting the assumption $\gamma^{(i)}\neq\gamma^{(j)}$.
			\item All four indices $\ell_1,\ell_2,k_1,k_2$ are distinct. Then the left and right side of (\ref{TAHTI}) affect different components and cannot be equal (e.g.\ in component $\ell_1$ the left side value is $-1$, right side $0$).
		\end{enumerate}
		In all cases we reach a contradiction. Thus the sets $B_i$ and $B_j$ can have at most one common element. This holds regardless of whether $\gamma^{(i)}< \gamma^{(j)}$ or not.
	\end{proof}
	\begin{proof}[Proof of Theorem \ref{POLYNMI} continued]
		Note that the coefficients $(\gamma_\ell^{(j)}+2)(\gamma_\ell^{(j)}+1)$ of the system of equations are all nonzero. For each index $j=1,...,m$, choose a multi-index
		\[
		\beta^{(j)}=\gamma^{(j)} + 2\varepsilon_{\ell_j},
		\]
		such that the corresponding variable $a_{\alpha,\beta^{(j)}}$ does not appear in the equations $1,...,j-1$. This is possible because, according to the previous lemma, the equations have at most one common variable, and there are fewer equations than variables.
		
		Denote the equations by $E_j$ and prove their linear independence inductively. The single equation $E_1$ is naturally linearly independent. Suppose that $E_1,...,E_{j-1}$ are linearly independent. Assume for contradiction that
		\[
		E_j=c_1E_1+\cdots + c_{j-1}E_{j-1}
		\]
		for some $c_1,...,c_{j-1}\in\mathbb{C}$. Then the variable $a_{\alpha,\beta^{(j)}}$, which appears only in $E_j$, does not appear on the right-hand side. This is a contradiction, and for each fixed $\alpha$ the equations are linearly independent. 
		
		Since for each fixed $\alpha$ the equations $E_1,...,E_m$ are linearly independent, all the equations (\ref{UHT}) are linearly independent. The number of equations is
		\[
		\sum_{|\alpha|\le k-2}\dim  \mathcal{P}_{k-2-|\alpha|}(\mathbb{C}^n) = \dim \mathcal{P}_{k-2}(\mathbb{C}^n)=\binom{2n + k - 3}{2n-1}.
		\]
		
		Hence, the number of linearly independent basis polynomials in the space $\mathcal{H}_k^{\mathcal{L}}(\mathbb{C}^n)$ is obtained by subtracting the number of linearly independent constraint equations from the total number of basis polynomials in the full space:
		\[
		\dim \mathcal{H}_k^{\mathcal{L}}(\mathbb{C}^n) = \dim \mathcal{P}_k(\mathbb{C}^n) - \dim \mathcal{P}_{k-2}(\mathbb{C}^n) = \binom{2n+k-1}{2n-1} - \binom{2n+k-3}{2n-1}.
		\]
		The cases $k=0,1$ are computed separately: 
		\begin{itemize}
			\item $k=0$: $\mathcal{H}_0^{\mathcal{L}}$ consists of constant polynomials, so $\dim \mathcal{P}_0(\mathbb{C}^n) = 1$.
			\item $k=1$: $\mathcal{L} P_1 \equiv 0$ for all $P_1 \in \mathcal{P}_1(\mathbb{C}^n)$ since the second derivatives vanish, hence $ \dim \mathcal{P}_1(\mathbb{C}^n)  = 2n$.
		\end{itemize}
	\end{proof}
	
	By the fundamental theorem of linear algebra, we have
	\[
	\dim \mathcal{P}_k(\mathbb{C}^n)=\dim \mathcal{H}_k^{\mathcal{L}}(\mathbb{C}^n) + \dim \text{Im}(\mathcal{L}),
	\]
	so by the previous proposition $\dim \text{Im}(\mathcal{L})=\dim \mathcal{P}_{k-2}(\mathbb{C}^n)$, which implies:
	\begin{cor}
		The operator $\mathcal{L}: \mathcal{P}_k(\mathbb{C}^n)\to  \mathcal{P}_{k-2}(\mathbb{C}^n)$ is surjective for $k\ge 2$. 
	\end{cor}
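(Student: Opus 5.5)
The plan is to prove surjectivity of $\mathcal{L}:\mathcal{P}_k(\mathbb{C}^n)\to\mathcal{P}_{k-2}(\mathbb{C}^n)$ directly, so that it does not depend on the linear-independence count in Theorem~\ref{POLYNMI}. It suffices to show that every basis monomial $z^\alpha\overline{z}^\gamma$ with $|\alpha|+|\gamma|=k-2$ lies in the image. We use the formula from the proof of the preceding proposition,
\[
\mathcal{L}(z^\alpha\overline{z}^\beta)=4\sum_{j=1}^n\beta_j(\beta_j-1)\,z^\alpha\overline{z}^{\beta-2\varepsilon_j}.
\]
In particular, $\mathcal{L}$ does not touch the holomorphic factor $z^\alpha$: it acts as $4\Delta_{\overline{z}}$ on the $\overline{z}$-variables, where $\Delta_{\overline{z}}=\sum_j\partial^2/\partial\overline{z}_j^2$ is the formal Laplacian in the variables $\overline{z}_1,\dots,\overline{z}_n$. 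The problem therefore reduces, for each fixed $\alpha$, to surjectivity of the ordinary Laplacian $\Delta:\mathcal{P}_{m+2}(\mathbb{C}^n)\to\mathcal{P}_m(\mathbb{C}^n)$ on complex polynomials in $n$ variables $w=\overline{z}$, with $m=|\gamma|$.

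That classical surjectivity can be proved in either of two standard ways. The first is to cite the Fischer decomposition $\mathcal{P}_{m+2}=\mathcal{H}_{m+2}\oplus |w|^2\mathcal{P}_m$ for $\Delta=\sum\partial_{w_j}^2$ with $|w|^2=\sum w_j^2$, which holds over $\mathbb{C}$ by the same algebraic argument as over $\mathbb{R}$. The second is a direct triangular construction: order monomials $w^\gamma$ lexicographically and solve $\Delta p=w^\gamma$ by starting with $p_0=w^{\gamma+2\varepsilon_1}/((\gamma_1+2)(\gamma_1+1))$. Then $\Delta p_0-w^\gamma$ only involves monomials with a strictly smaller first exponent, and we iterate downward in $\gamma_1$. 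The process terminates because the first exponent decreases by $2$ at each step, and the error term is a combination of $w^{\gamma+2\varepsilon_1-2\varepsilon_j}$, $j\ge2$. I would present this induction on $\gamma_1$ as the core step, since it is self-contained.

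Concretely, I would prove by induction on $\gamma_1$ that each $w^\gamma$ with $|\gamma|=m$ is in $\Delta(\mathcal{P}_{m+2})$. If $\gamma_1=0$, the same argument applies with the roles shifted to a variable $j$ where $\gamma_j$ is positive, or simply because $w^{\gamma+2\varepsilon_1}$ has $\Delta$-image $2w^\gamma$ plus terms of the form $w^{\gamma+2\varepsilon_1-2\varepsilon_j}$ with $\gamma_1+2$ in the first slot. Such terms are not smaller in $\gamma_1$, which complicates the naive induction. The right ordering is therefore to induct on the \emph{sum of all exponents except the first}, $N=|\gamma|-\gamma_1$. We have
\[
\Delta\Big(\frac{w^{\gamma+2\varepsilon_1}}{(\gamma_1+2)(\gamma_1+1)}\Big)=w^\gamma+\sum_{j\ge2}c_j\,w^{\gamma+2\varepsilon_1-2\varepsilon_j},
\]
and each error term has $N$ decreased by $2$. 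When $N<2$ there are no error terms, which gives the base case. Multiplying by $z^\alpha$ and passing back through $\mathcal{L}=4\Delta_{\overline{z}}$ gives the corollary for all $k\ge2$.

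The only delicate point is choosing a well-founded ordering so that the correction terms are genuinely "smaller". Induction on $\gamma_1$ fails, as noted, but induction on $N=|\gamma|-\gamma_1$ works, and that is the step I would check most carefully. As a consistency check, combining surjectivity with rank–nullity recovers the dimension formula of Theorem~\ref{POLYNMI}, so the two results can be presented in either logical order.
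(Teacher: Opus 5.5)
Your argument is correct, but it runs in the opposite logical direction from the paper's. The paper never constructs preimages. It proves the dimension formula of Theorem~\ref{POLYNMI} by showing that the $\dim\mathcal{P}_{k-2}(\mathbb{C}^n)$ coefficient constraints (\ref{UHT}) are linearly independent, and then reads off $\dim\mathrm{Im}(\mathcal{L})=\dim\mathcal{P}_{k-2}(\mathbb{C}^n)$ by rank--nullity.

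You instead observe that $\mathcal{L}=4\sum_j\partial^2/\partial\overline{z}_j^2$ leaves the factor $z^\alpha$ untouched, reduce to the Laplacian in $w=\overline{z}$, and build an explicit preimage of each monomial. You do this by strong induction on $N=|\gamma|-\gamma_1$, with pivot $w^{\gamma+2\varepsilon_1}$. The dimension formula then follows from surjectivity, rather than the other way round.

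Your route gives a self-contained, constructive proof and a structural picture: $\mathcal{H}^{\mathcal{L}}_k$ is spanned by $z^\alpha$ times polynomials in $\overline{z}$ that are harmonic in those variables. It also makes explicit what the paper's independence argument leaves implicit. The paper picks, in each equation, a variable absent from the earlier ones. It justifies this only by the lemma that two equations share at most one variable, plus the global count of equations versus unknowns. That does not by itself guarantee a fresh variable in the $j$-th equation once $j-1\ge n$. The actual reason is your triangularity: $a_{\alpha,\gamma+2\varepsilon_1}$ occurs in no equation indexed by a lexicographically smaller $\gamma$. The paper's route, for its part, aims at the kernel dimension directly.

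When you write it up, remove the claim in your second paragraph that the correction terms have strictly smaller first exponent, and that the first exponent decreases at each step. The correction terms are $w^{\gamma+2\varepsilon_1-2\varepsilon_j}$, with first exponent $\gamma_1+2$. That is exactly why the induction must go downward in $\gamma_1$, equivalently upward in $N$, as your third paragraph correctly does. Also avoid reusing $\mathcal{P}_m(\mathbb{C}^n)$ for polynomials in $w$ alone, since in the paper that symbol denotes polynomials in both $z$ and $\overline{z}$.
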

	By direct calculation, we have the examples
	\begin{align*}
		\mathcal{H}_2^{\mathcal{L}}(\mathbb{C})&=\text{span}\{ z^2,z\overline{z}\},\\ 
		\mathcal{H}_3^{\mathcal{L}}(\mathbb{C})&=\text{span}\{z^3, z^2\overline{z}\},\\
		\mathcal{H}_4^{\mathcal{L}}(\mathbb{C}) &= \text{span}\{ z^4,\, z^3 \overline{z} \},\\
		\mathcal{H}_2^{\mathcal{L}}(\mathbb{C}^2)
		&=\text{span}\{ z_1^2 , z_2^2, z_1\overline{z}_1, z_1\overline{z}_2, z_2\overline{z}_1, z_2\overline{z}_2,  \overline{z}_1\overline{z}_2, z_1z_2, \overline{z}_1^2-\overline{z}_2^2\},\\ 
		\mathcal{H}_3^{\mathcal{L}}(\mathbb{C}^2)&=\text{span}\Big\{
		z_1^3,\, z_2^3,\, z_1^2 z_2,\, z_1 z_2^2,\, 
		z_1^2 \overline{z}_1,\, z_1 z_2 \overline{z}_1,\, z_2^2 \overline{z}_1,\, 
		z_1^2 \overline{z}_2,\, z_1 z_2 \overline{z}_2,\, z_2^2 \overline{z}_2,\, 
		z_1 \overline{z}_1 \overline{z}_2,\, z_2 \overline{z}_1 \overline{z}_2,\\
		&\qquad  \qquad  \overline{z}_1^3 - 3 \overline{z}_1 \overline{z}_2^2,\,
		3\overline{z}_1^2 \overline{z}_2 -  \overline{z}_2^3,\,
		z_1 \overline{z}_1^2 - z_1 \overline{z}_2^2,\,
		z_2 \overline{z}_1^2 - z_2 \overline{z}_2^2
		\Big\},\\
		\mathcal{H}_4^{\mathcal{L}}(\mathbb{C}^2) &=
		\text{span} \Big\{
		z_1^4,\, z_1^3 z_2,\, z_1^2 z_2^2,\, z_1 z_2^3,\, z_2^4,\,
		z_1^3 \overline{z}_1,\, z_1^2 z_2 \overline{z}_1,\, z_1 z_2^2 \overline{z}_1,\, z_2^3 \overline{z}_1,\,
		z_1^3 \overline{z}_2,\, z_1^2 z_2 \overline{z}_2,\, z_1 z_2^2 \overline{z}_2,\, z_2^3 \overline{z}_2,\,
		z_1^2 \overline{z}_1 \overline{z}_2,\, z_1 z_2 \overline{z}_1 \overline{z}_2,\, z_2^2 \overline{z}_1 \overline{z}_2\\
		&\qquad  \qquad \overline{z}_1^4 - 6 \overline{z}_1^2 \overline{z}_2^2 + \overline{z}_2^4,\,
		\overline{z}_1^3 \overline{z}_2 - \overline{z}_1 \overline{z}_2^3,\,
		z_1 \overline{z}_1^3 - 3 z_1 \overline{z}_1 \overline{z}_2^2, \,
		3z_1 \overline{z}_1^2 \overline{z}_2 -  z_1 \overline{z}_2^3, \,
		z_2 \overline{z}_1^3 - 3 z_2 \overline{z}_1 \overline{z}_2^2, \,
		3z_2 \overline{z}_1^2 \overline{z}_2 -  z_2 \overline{z}_2^3, \\
		&\qquad  \qquad z_1^{2}\overline{z}_1^{2}-z_1^{2}\overline{z}_2^{2},\,  z_2^{2}\overline{z}_1^{2}-z_2^{2}\overline{z}_2^{2},\, z_1 z_2 \overline{z}_1^{2}-z_1 z_2 \overline{z}_2^{2}
		\Big\}.
	\end{align*}
	Although the operator $\mathcal{L}: \mathcal{P}_k(\mathbb{C}^n)\to  \mathcal{P}_{k-2}(\mathbb{C}^n)$ is surjective, the Fischer decomposition
	\[
	\mathcal{P}_k(\mathbb{C}^n)= Q(z)\mathcal{H}_k^{\mathcal{L}}(\mathbb{C}^n) \oplus \mathcal{P}_{k-2}(\mathbb{C}^n)
	\]
	where $Q(z) = \sum_{j=1}^n z_j^2$, does not hold. More precisely, the operator $\mathcal{L}$ does not determine the Fischer decomposition because it is not elliptic. The following counterexample can be constructed:
	
	\begin{example} Let $n=1$ and $k=2$. In this situation, the dimension of the 2-homogeneous polynomials is 3, and using the previous example, we obtain
		
		\begin{align*}
			\mathcal{P}_2(\mathbb{C}) &= \operatorname{span}\{ z^2, z\overline{z}, \overline{z}^2 \},\\
			\mathcal{H}_2^{\mathcal{L}}(\mathbb{C}) &= \operatorname{span}\{ z^2, z\overline{z} \},\\
			Q(z) \mathcal{P}_0(\mathbb{C}) &= z^2 \cdot \mathbb{C} = \operatorname{span}\{ z^2 \},\\
			\mathcal{H}_2^{\mathcal{L}}(\mathbb{C}) \cap Q(z) \mathcal{P}_0(\mathbb{C}) &= \operatorname{span}\{ z^2 \} \neq \{0\}.
		\end{align*}
	\end{example}
	For the octonionic case, we have
	\begin{align*}
		\dim \mathcal{H}_1^{\mathcal{L}}(\mathbb{C}^8)&=16,\\
		\dim \mathcal{H}_2^{\mathcal{L}}(\mathbb{C}^8)&=135,\\
		\dim \mathcal{H}_3^{\mathcal{L}}(\mathbb{C}^8)&=800,\\
		\dim \mathcal{H}_4^{\mathcal{L}}(\mathbb{C}^8)&=3740.\\
	\end{align*}
	Define octonionic polynomial solutions of the operator 
	\[
	\mathcal{L}=D\overline{D}=\overline{D}D=\sum_{j=0}^7 \partial_{z_j}^2
	\]
	by setting
	\[
	\mathcal{H}_k^{\mathcal{L}}(\mathbb{O}):=\mathcal{H}_k^{\mathcal{L}}(\mathbb{C}^8)\otimes \mathbb{O},
	\]
	i.e., $P_k\in \mathcal{H}_k^{\mathcal{L}}(\mathbb{O})$ if and only if
	\[
	P_k(z)=\sum_{j=0}^7 a_j P_k^{(j)}(z),
	\]
	where $P_k^{(j)}\in \mathcal{H}_k^{\mathcal{L}}(\mathbb{C}^8)$ and $a_j\in\mathbb{O}$ for $j=0,1,...,7$. Then $k$-homogeneous monogenic polynomial solutions for the operator $D$ can be constructed by setting
	\[
	Q_k(z):=\overline{D}P_{k+1}(z),
	\]
	where $P_{k+1}\in \mathcal{H}_{k+1}^{\mathcal{L}}(\mathbb{O})$. We leave the detailed study of these polynomials for later. 
	
	\begin{rem}
		Note that the operator $\mathcal{L}$ has the same number of $k$-homogeneous polynomial solutions in the space $\mathbb{R}^{2n}$ as the corresponding $k$-homogeneous harmonic polynomials. See Proposition 5.8 in \cite{ABR} for details.
	\end{rem}
	Finally, we would like to point out to the reader what lies behind the consideration of all polynomials. In this way, we aim to convince that the function theory of complexified octonions contains a wealth of interesting functions. This is always a fundamental point that it is good to state when developing any new function theory.
	
	\subsection*{Acknowledgement}
	This research article originated while the second author was visiting the University of Erfurt in the summer of 2025. He wishes to thank the university staff for the inspiring atmosphere and their highly professional attitude toward mathematics.
	
	\section*{Declarations}
	
	\subsection*{Availability of data and material}
	All data generated or analysed during this study are included in this published article.
	
	\subsection*{Competing interests}
	The authors declare that they have no competing interests.
	
	\subsection*{Funding}
	This research received no external funding.
	
	\subsection*{Authors' contributions}
	Both authors contributed equally to the conception, analysis, and writing of this manuscript.


\begin{thebibliography}{99}
		
		\bibitem{ABR} S. Axler, P. Bourdon, and W. Ramey, \textit{Harmonic Function Theory}, 2nd ed., Springer, New York, 2001.
		
		\bibitem{Baez}
		J. Baez, \emph{The octonions}, Bull. Amer. Math. Soc. \textbf{39}(2) (2002), 145--205.
		
		\bibitem{BW} C. Bisi, and J. Winkelmann, Automorphisms for slice-regular functions: the octonionic case (2024). https://arxiv.org/pdf/2411.16762. To appear
		
		\bibitem{Bosshard1939} P. Bo{\ss}hard. Die Cliffordschen Zahlen, ihre Algebra und ihre Funktionentheorie, PhD Thesis, Univ. Z\"urich, 1940.
		
		
		\bibitem{BDS}
		F. Brackx, R. Delanghe, and F. Sommen \emph{Clifford Analysis}, Pitman Res. Notes in Math.-Vol.76, Boston, 1982.
		
		\bibitem{BSS} 
		
		F. Brackx, H. De Schepper and F. Sommen. The Hermitian Clifford analysis toolbox. {\it Adv. Appl. Clifford Algebr.} {\bf 18}(3–4) (2008), 451–-487. 
		
		
		\bibitem{Burdik}
		C. Burdik, S. Catto, Y. G\"urcan, A. Khalfan, L. Kurt, and V. Kato La. \emph{$SO(9,1)$ group and examples of analytic functions} Journal of Physics: Conference Series-Vol.1194 (2019), Article No. 012016.
		
		
		\bibitem{CKS2023} F. Colombo, R.S. Krau{\ss}har, and I. Sabadini, \emph{Octonionic monogenic and slice monogenic Hardy and Bergman spaces},  Forum Mathematicum, in press, https://doi.org/10.1515/forum-2023-0039.
		
		
		
		\bibitem{ConKra2021}
		D. Constales, R.S. Krau{\ss}har, \emph{Octonionic Kerzman-Stein operators}, Compl. Anal. Oper. Theory \textbf{15}(6) (2021), Article No. 104 (23pp.).
		
		
		
		\bibitem{DS}
		P. Dentoni and M. Sce, \emph{Funzioni regolari nell’algebra di Cayley}, Rend. Sem. Mat. Univ. Padova \textbf{50} (1973), 251--267.
		
		\bibitem{Dinh} D. C. Dinh. Cauchy–Riemann operator in Cayley–Dickson–Clifford analysis, Boletin de la Sociedad Mexicana {\bf 30} (2024) Article Number 89.
		
		\bibitem{FL}
		E. Frenod and S. V. Ludkovski, \emph{Integral operator approach over octonions to solution of nonlinear PDE}, Far East J. Math. Sci. \textbf{103}(5) (2018), 831--876.
		
		\bibitem{Fueter1949} R. Fueter. Functions of a Hyper Complex Variable, Lecture notes written and supplemented by E. Bareiss, Math. Institut, Univ. Z\"urich, 1948/49
		
		\bibitem{GH}
		H. H. Goldstine and L. P. Horwitz, \emph{Hilbert space with non-associative scalars I}, Math. Ann. \textbf{154}(1) (1964), 1--27.
		
		\bibitem{GS2}
		K. G\"urlebeck and W. Spr\"o{\ss}ig, \emph{Quaternionic and Clifford calculus for physicists and engineers}, Mathematical Methods in Practice, Wiley, Chichester, 1997.
		
		
		\bibitem{QR2021}
		Q. Huo and G. Ren, \emph{Para-linearity as the nonassociative counterpart of linearity}, J. Geom. Anal. \textbf{32}(12) (2022), Article No. 304 (30pp.).
		
		\bibitem{QR2022}
		Q. Huo and G. Ren, \emph{Structure of octonionic Hilbert spaces with applications in the Parseval Equality and Cayley-Dickson algebras}, J. Math. Phys. {63}(4) (2022), Article No. 042101 (24pp.).
		
		\bibitem{KO} J. Kauhanen and H. Orelma, \textit{Cauchy-Riemann Operators in Octonionic Analysis},  Adv. Appl. Clifford Algebras 28, 1 (2018)
		
		\bibitem{Kauhanen_3}
		J. Kauhanen and H. Orelma, \emph{On the structure of octonion regular functions}, Adv. Appl. Clifford Algebr. \textbf{29}(4) (2019), Article No. 77 (17pp.).
		
		\bibitem{KraMMAS}
		R.S. Krau{\ss}har, \emph{Recent and new results on octonionic Bergman and Szeg\"o kernels},  Math. Meth. Appl. Sci. (2021), 1--14, https://doi.org/10.1002/mma.7316, 14pp.
		
		
		
		\bibitem{KFVR} R.S. Krau{\ss}har, M. Ferreira, N. Vieira, M.M. Rodrigues, The Teodorescu and the $\Pi$-operator in octonionic analysis and some applications, Journal of Geometry and Physics, {\bf 206} (2024), 105328
		
		
		\bibitem{XLT2008}
		X. Li, L. Peng, and T. Qian, \emph{Cauchy integrals on Lipschitz surfaces in octonionic space}, J. Math. Anal. Appl. \textbf{343}(2) (2008), 763--777.
		
		\bibitem{XL2002}
		X. Li and L. Peng, \emph{The Cauchy integral formulas on the octonions}, Bull. Belg. Math. Soc. \textbf{9}(1)  (2002), 47--62.
		
		
		
		\bibitem{XL2000}
		X. Li and L. Peng,  \emph{On Stein-Weiss conjugate harmonic function and octonion analytic function},  Approx. Theory Appl. \textbf{16} (2000), 28--36.
		
		\bibitem{LRW} Yong Li, Guangbin Ren and Haiyan Wang. Expliciz Witt basis over the tesor product of Clifford algebras and octonions, Preprint 2025, https://arxiv.org/pdf/2404.03487v1
		
		\bibitem{Nono}
		K. N\^{o}no, \emph{On the octonionic linearization of Laplacian and octonionic function theory}, Bull. Fukuoka Univ. Ed. Part III \textbf{37} (1988), 1--15.
		
		\bibitem{Prather2021}
		B. Prather, \emph{Octonions -- Hilbert spaces, fibrations and analysis}, PhD Thesis, Florida State University, 2021.
		
		\bibitem{Ryan} J. Ryan. Complexified Clifford Analysis, Complex Analysis 1(1982), 119--149.
		
		\bibitem{SaSo2002} I. Sabadini and F. Sommen.  Hermitian Clifford analysis and resolutions, {\it Math. Meth. Appl. Sci.} 
		{\bf 25}(2002), 1395--1413.
		
		\bibitem{SpringerVeldkamp2000} T. A. Springer and F. D. Veldkamp, \emph{Octonions, Jordan Algebras and Exceptional Groups}, Springer Monographs in Mathematics, Springer-Verlag, Berlin, Heidelberg, 2000. 
		
		\bibitem{WL2018}
		J. Wang and X. Li, \emph{The octonionic Bergman kernel for the unit ball}, Adv. Appl. Clifford Algebras \textbf{28}(3) (2018), Article No. 60 (10pp.).
		
		\bibitem{WL2020}
		J. Wang and X. Li, \emph{The octonionic Bergman kernel for the half space}, Adv. Appl. Clifford Algebras \textbf{30}(4) (2020), Article No. 57 (11pp.)
		
		
		
		
	\end{thebibliography}
\end{document}